\newcommand{\norm}[1]{\left\| #1 \right\|}  
\newcommand{\scprd}[1]{\left\langle #1 \right\rangle}  
\newcommand{\N}{\mathbb{N}}  
\newcommand{\R}{\mathbb{R}}
\newcommand{\C}{\mathbb{C}}
\newcommand{\ran}{\operatorname{ran}}
\newcommand{\ul}{\underline}
\newcommand{\ol}{\overline}
\renewcommand{\Re}{\operatorname{Re}}
\renewcommand{\Im}{\operatorname{Im}}
\numberwithin{equation}{section}
\newtheorem{thm}{Theorem}[section]
\newtheorem{cor}[thm]{Corollary}
\newtheorem{prop}[thm]{Proposition}
\theoremstyle{definition} 
\theoremstyle{definition}
\title{Well-posedness of non-autonomous linear evolution equations for generators whose commutators are scalar}
\author{Jochen Schmid\\  
\small Fachbereich Mathematik, Universität Stuttgart, D-70569 Stuttgart, Germany\\
\small jochen.schmid@mathematik.uni-stuttgart.de}
\date{}
\begin{document}

\maketitle

\begin{abstract}
\small{ \noindent We prove the well-posedness of non-autonomous linear evolution equations for generators $A(t): D(A(t)) \subset X \to X$ whose pairwise commutators are complex scalars and, in addition, we establish an explicit representation formula for the evolution.
We also prove well-posedness in the more general case where instead of the $1$-fold commutators only the $p$-fold commutators of the operators $A(t)$ are complex scalars. All these results are furnished with rather mild stability and regularity assumptions: indeed, stability in $X$ and strong continuity conditions are sufficient. 
Additionally, we improve a well-posedness result of Kato for group generators $A(t)$ by showing that the original norm continuity condition
can be relaxed to strong continuity.  
Applications include Segal field operators and Schrödinger operators for particles in external electric fields.} 
\end{abstract}

{\small \noindent \emph{2010 Mathematics Subject Classification:} 47D06 (primary), 
35Q41 (secondary)   
\\
\emph{Key words and phrases:} well-posedness of non-autonomous linear evolution equations for semigroup generators whose pairwise commutators are complex scalars and for group generators}

\section{Introduction}

In this paper, we are concerned with non-autonomous linear evolution equations 
\begin{align} \label{eq: ivp}
x' = A(t)x  \,\, (t \in [s,1]) \quad \text{and} \quad  x(s) = y
\end{align}
for densely defined linear operators $A(t): D(A(t)) \subset X \to X$  ($t \in [0,1]$) and initial values $y \in Y \subset D(A(s))$ at initial times $s \in [0,1)$. Well-posedness of such evolution equations has been studied by many authors in a large variety of situations. See, for instance, \cite{Pazy83}, \cite{Schnaubelt02}, \cite{NagelNickel02}, 
\cite{NeidhardtZagrebnov09} for an overview.  
In this paper, we are primarily interested in the special situation of semigroup generators $A(t)$ whose first ($1$-fold) or higher ($p$-fold) commutators at distinct times are complex scalars, in short: 
\begin{gather}
[A(t_1), A(t_2)]  = \mu(t_1,t_2) \in \C    \label{eq: 1-facher kommutator skalar}\\
\text{or} \notag \\
\big[ \dots \big[ [A(t_1), A(t_2)], A(t_3) \big] \dots, A(t_{p+1}) \big]  = \mu(t_1, \dots, t_{p+1}) \in \C     \label{eq: p-facher kommutator skalar}
\end{gather}
in some sense to be made precise (see the commutation relations~\eqref{eq: vertrel, p=1}, \eqref{eq: vertrel, p=1, direkter} and~\eqref{eq: vertrel, allg p}, \eqref{eq: vertrel, allg p, direkter}). 
In this special situation we prove well-posedness 
for~\eqref{eq: ivp} on suitable dense subspaces $Y$ of $X$ 
and, moreover, in the case~\eqref{eq: 1-facher kommutator skalar} we prove the representation formula
\begin{align} \label{eq: repr}
U(t,s) = e^{ \ol{\int_s^t A(\tau)\, d\tau } } \, e^{1/2 \int_s^t \int_s^{\tau} \mu(\tau,\sigma)\,d\sigma d\tau}
\end{align}
for the evolution 
generated by the operators $A(t)$. 
We thereby generalize a well-posedness result of Goldstein and of Nickel and Schnaubelt from~\cite{Goldstein69}, \cite{NickelSchnaubelt98} dealing with the special case of~\eqref{eq: 1-facher kommutator skalar} where $\mu \equiv 0$: in \cite{Goldstein69} contraction semigroup generators are considered, while in~\cite{NickelSchnaubelt98} contraction semigroup generators are replaced by 
general semigroup generators and the formula~\eqref{eq: repr} with $\mu \equiv 0$ is proved.

What one gains by 
restricting oneself to the special class of semigroup generators with~\eqref{eq: 1-facher kommutator skalar} or~\eqref{eq: p-facher kommutator skalar} -- instead of considering general semigroup generators as 
in~\cite{Kato53}, \cite{Kato70}, \cite{Kato73}, for instance -- is that well-posedness 
can be established under fairly weak stability and regularity conditions:
1. It is sufficient -- just as in the case of commuting operators from~\cite{Goldstein69}, \cite{NickelSchnaubelt98} -- to require stability of the family $A$ only in $X$. In contrast to the well-posedness theorems from~\cite{Kato70} or~\cite{Kato73}, for instance, it is not necessary to additionally require stability in a suitable invariant and suitably normed dense subspace $Y$ of $X$ contained in all the domains of the operators $A(t)$, which is generally difficult to verify unless the domains of the $A(t)$ are time-independent.
2. It is sufficient -- similarly to the case of commuting operators from~\cite{Goldstein69}, \cite{NickelSchnaubelt98} or to the elementary case of bounded operators -- to require strong continuity conditions: indeed, it is sufficient if 
\begin{align*}
t \mapsto A(t)y \quad \text{and} \quad  (t_1,\dots,t_{k+1}) \mapsto \big[\dots, [[A(t_1), A(t_2)], A(t_3)] \dots, A(t_{k+1}) \big]y 
\end{align*}
are continuous for $k \in \{1, \dots, p\}$ and $y$ in a dense subspace $Y$ of $X$ contained in all the respective domains.
In contrast to the well-posedness theorems from~\cite{Kato70} or~\cite{Kato73}, this subspace $Y$ need not be normed in any way whatsoever and $t \mapsto A(t)|_Y$ need not be norm continuous. And furthermore, it is not necessary to require an additional $W^{1,1}$-regularity condition on certain auxiliary operators $S(t): Y \to X$ (as in the well-posedness theorems from~\cite{Kato70}, \cite{Kato73} for general semigroup generators $A(t)$) or an additional regularity condition on certain auxiliary norms $\norm{\,.\,}_t^{\pm}$ on $Y$ (as in the special well-posedness result from~\cite{Kato70} for a certain kind of group generators). 
Such additional regularity conditions are necessary for well-posedness in general situations without commutator conditions 
of the kind~\eqref{eq: 1-facher kommutator skalar} or~\eqref{eq: p-facher kommutator skalar} 
-- even if the domains of the $A(t)$ are time-independent (see the examples in~\cite{Phillips53}, \cite{EngelNagel00} or~\cite{SchmidGriesemer15},
for instance). 
%
%

As is well-known from~\cite{Magnus54}, \cite{Fer58}, \cite{Wilcox67}, in the 
case of bounded operators $A(t)$ one has representation formulas of Campbell--Baker--Hausdorff and Zassenhaus type for the evolution, 
which in the case~\eqref{eq: 1-facher kommutator skalar} reduce to our representation formula~\eqref{eq: repr}. It should be noticed, however, that for bounded operators condition~\eqref{eq: 1-facher kommutator skalar} can be satisfied only if $\mu \equiv 0$, so that~\eqref{eq: repr} is independent of~\cite{Magnus54}, \cite{Fer58}, \cite{Wilcox67} (for non-zero $\mu$). In view of the representation formulas from~\cite{Wilcox67} it is desirable to prove representation formulas analogous to~\eqref{eq: repr} also in the case~\eqref{eq: p-facher kommutator skalar}, but this is left to future research.

All proofs in connection with the special situations
\eqref{eq: 1-facher kommutator skalar} or~\eqref{eq: p-facher kommutator skalar} are, in essence, based upon the observation 
that in these 
situations the operators $A(r)$ can be commuted -- up to controllable errors -- through the exponential factors of the standard approximants $U_n(t,s)$ from~\cite{Goldstein69}, \cite{Kato70}, \cite{Kato73}, \cite{NickelSchnaubelt98} for the sought evolution, which are of the form
\begin{align*}
U_n(t,s) = e^{A(r_m) \tau_m} \dotsb e^{A(r_1) \tau_1}
\end{align*}
with partition points $r_1, \dots, r_m$ of the interval $[s,t]$. See~\eqref{eq: central} and~\eqref{eq: vorbeiziehrel 2, allg p} respectively.

Apart from proving well-posedness for semigroup generators with~\eqref{eq: 1-facher kommutator skalar} or~\eqref{eq: p-facher kommutator skalar} (which is our primary interest), we also improve the above-mentioned 
special well-posedness result from~\cite{Kato70} for a certain kind of group generators: 
in the spirit of~\cite{Kobayasi79} we show that strong (instead of norm) continuity is sufficient in this result -- just like in our other well-posedness results 
for the case~\eqref{eq: 1-facher kommutator skalar} or~\eqref{eq: p-facher kommutator skalar}. 
And in a certain special case involving quasicontraction group generators with time-independent domains in a uniformly convex space, these other results can also be obtained by applying the improved well-posedness result for group generators.

In Section~2 we state and prove our abstract well-posedness results, Section~2.1 and Section~2.2 being devoted to the case~\eqref{eq: 1-facher kommutator skalar} and~\eqref{eq: p-facher kommutator skalar} respectively and Section~2.3 being devoted to the improved well-posedness result for group generators. Section~2.4 discusses, among other things, the relation of our well-posedness results from Section~2.1 and~2.2 to the results from~\cite{Kato70}, \cite{Kato73}, \cite{Kobayasi79} and to the result from Section~2.3. 
In Section~3 we give some applications, namely to Segal field operators $\Phi(f_t)$ as well as to the related operators $H_{\omega} + \Phi(f_t)$ describing a classical particle coupled to a time-dependent quantized field of bosons (Section~3.1) and finally to Schrödinger operators describing a quantum particle coupled to a time-dependent spatially constant electric field (Section~3.2).

\section{Abstract well-posedness results}

We will use the notion of well-posedness and evolution systems from~\cite{EngelNagel00}. 
So, if $A(t): D(A(t)) \subset X \to X$ for every $t \in I := [0,1]$ is a 
linear operator and $Y$ is a dense subspace of $X$ contained in $\cap_{\tau \in I} D(A(\tau))$, then the initial value problems~\eqref{eq: ivp} for $A$ are called \emph{well-posed on $Y$} if and only if there exists an \emph{evolution system $U$ solving~\eqref{eq: ivp} on $Y$} or, for short, an \emph{evolution system $U$ for $A$ on $Y$}. An evolution system for $A$ on $Y$ is, by definition, a family of bounded operators $U(t,s)$ in $X$ for $s \le t$ such that
\begin{itemize}
\item[(i)] $[s,1] \ni t \mapsto U(t,s)y$ for $y \in Y$ and $s \in [0,1)$ is a continuously differentiable solution of~\eqref{eq: ivp} with values in $Y$,
\item[(ii)] $U(t,s) U(s,r) = U(t,r)$ for all $r \le s \le t$ and $(s,t) \mapsto U(t,s)$ is strongly continuous.
\end{itemize}
Such an evolution system is necessarily unique: if $U$ and $V$ are two evolution systems for $A$ on $Y$, then $[s,t] \ni \tau \mapsto U(t,\tau)V(\tau,s)y$ for every $y \in Y$ is continuous and right differentiable with vanishing right derivative, because 
\begin{align*}
\frac 1 h \big( U(t,\tau+h) - U(t,\tau) \big)z 
= -U(t,\tau+h) \frac{ U(\tau+h,\tau)z - z}{h} \longrightarrow -U(t,\tau)A(\tau)z 
\end{align*}
as $h \searrow 0$ for all $z \in Y$ and because $\frac 1 h (V(\tau+h,s)y - V(\tau,s)y) \longrightarrow A(\tau)V(\tau,s)y$ as $h \to 0$ and $V(\tau,s)y \in Y$ for all $y \in Y$. 
With the help of Corollary~2.1.2 of~\cite{Pazy83} it then follows that
\begin{align*}
V(t,s)y - U(t,s)y = U(t,\tau)V(\tau,s)y \big|_{\tau=s}^{\tau=t} = 0,
\end{align*}
which by the density of $Y$ in $X$ implies $U = V$, as desired.
%
At some places we will also use the notion of 
evolution systems from~\cite{NickelSchnaubelt98}, which is slightly weaker than the one above 
in that it does not require that $[s,1] \ni t \mapsto U(t,s)y$ have values in $Y$ for every $y \in Y$ and $s \in [0,1)$ (while all other conditions from above are taken over). 
We will then speak of \emph{evolution systems in the wide sense for $A$ on $Y$} 
and, in case there exists exactly one such evolution system in the wide sense, we will speak of \emph{well-posedness in the wide sense on $Y$}.
%
%
Commutators of possibly unbounded operators are taken in the operator-theoretic sense, 
\begin{align*}
D([A,B]) := D(AB - BA) = D(AB) \cap D(BA),
\end{align*}
except in some formal heuristic computations (whose formal character will always be pointed out).
$X = (X,\norm{\,.\,})$ will always stand for a complex Banach space, $I = [0,1]$ denotes the compact unit interval, and $\Delta$ the triangle $\{(s,t) \in I^2: s \le t\}$.
We will finally also need the standard notions 
of $(M, \omega)$-stability, of the part of an operator $A$ in a subspace $Y$, and of $A$-admissible subspaces from~\cite{Kato70} or~\cite{Pazy83}, 
and we briefly recall them here for the sake of convenience. A family $A$ of semigroup generators $A(t)$ on $X$ for $t \in I$ is called \emph{$(M,\omega)$-stable} (where 
$M \in [1,\infty)$ and $\omega \in \R$) if and only if
\begin{align*}
\norm{ e^{A(t_n)s_n} \dotsb e^{A(t_1)s_1} } \le M e^{\omega( s_1 + \dotsb + s_n)}
\end{align*}  
for all $n \in \N$ and all $t_1, \dots, t_n \in I$ with $t_n \ge \dotsb \ge t_1$ and $s_1, \dots, s_n \in [0,\infty)$. 
If $A$ is an arbitrary operator in $X$ and $Y$ is an arbitrary subspace of $X$, then the operator $\tilde{A}$ defined by 
\begin{align*}
D(\tilde{A}) := \big\{ y \in D(A) \cap Y: A y \in Y \big\} \quad \text{and} \quad \tilde{A}y := Ay \quad (y \in D(\tilde{A}))
\end{align*} 
is called the \emph{part of $A$ in $Y$} or, for short, the \emph{$Y$-part of $A$}.
%
If $A$ is a semigroup generator on $X$, then a subspace $Y$ of $X = (X,\norm{\,.\,})$ endowed with a norm $\norm{\,.\,}_*$ is called \emph{$A$-admissible} if and only if 
\begin{itemize}
\item[(i)] $(Y,\norm{\,.\,}_*)$ is a Banach space densely and continuously embedded in $(X,\norm{\,.\,})$, 
\item[(ii)] $e^{A s} \, Y \subset Y$ for all $s \in [0,\infty)$ and the restriction 
$e^{A\,.\,}|_Y$ is a strongly continuous semigroup in $(Y,\norm{\,.\,}_*)$. 
\end{itemize}
In this case, the semigroup 
$e^{A\,.\,}|_Y$ is generated by the part $\tilde{A}$ of $A$ in $Y$.
%
%
%

\subsection{Scalar $1$-fold commutators}

In this subsection we prove well-posedness for~\eqref{eq: ivp} in the case~\eqref{eq: 1-facher kommutator skalar} where the $1$-fold commutators of the operators $A(t)$ are complex scalars. We have to make precise the merely formal commutation relation~\eqref{eq: 1-facher kommutator skalar}, of course, and we begin with a well-posedness result where~\eqref{eq: 1-facher kommutator skalar} is replaced by the formally equivalent commutation relation~\eqref{eq: vertrel, p=1} for the semigroups 
$e^{A(t)\,.\,}$ with the generators $A(s)$. 
In addition to well-posedness this theorem also yields a representation formula for the evolution. It is a generalization of a well-posedness result of Goldstein~\cite{Goldstein69} (Theorem~1.1) and -- after the slight modifications discussed in~\eqref{eq: stab nickelschnaubelt} and~\eqref{eq: stab nickelschnaubelt, strikt} below -- of Nickel and Schnaubelt~\cite{NickelSchnaubelt98} (Theorem~2.3 and Proposition~2.5). 

\begin{thm} \label{thm: wohlgestelltheit, p=1}
Suppose $A(t): D(A(t)) \subset X \to X$ for every $t \in I$ is the generator of 
a strongly continuous semigroup on $X$ such that $A$ is $(M,\omega)$-stable for some $M \in [1,\infty)$ and $\omega \in \R$ and 
such that for some complex numbers $\mu(s,t) \in \C$
\begin{align} \label{eq: vertrel, p=1}
A(s) e^{A(t)\tau} \supset e^{A(t)\tau} \big( A(s) + \mu(s,t) \tau \big)
\end{align}
for all $s, t \in I$ and $\tau \in [0,\infty)$. Suppose further that the maximal continuity subspace 
\begin{align} \label{eq: max stetur, p=1}
Y^{\circ} := \{ y \in \cap_{\tau \in I} D(A(\tau)): t \mapsto A(t)y \text{ is continuous} \}
\end{align}
is dense in $X$ and that $(s,t) \mapsto \mu(s,t)$ is continuous. Then there exists a unique evolution system $U$ for $A$ on $Y^{\circ}$ and it is given by
\begin{align*}
U(t,s) = e^{ \ol{ ( \int_s^t A(\tau) \, d\tau )^{\circ} } } e^{1/2 \int_s^t \int_s^{\tau} \mu(\tau, \sigma) \, d\sigma \, d\tau } \quad ((s,t) \in \Delta),
\end{align*}
where $\big( \int_s^t A(\tau) \, d\tau \big)^{\circ}$ is the (closable) operator defined by $y \mapsto \int_s^t A(\tau)y \, d\tau$ on $Y^{\circ}$.
\end{thm}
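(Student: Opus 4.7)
The plan is to follow the classical approximation strategy: approximate the sought evolution by the standard products
\begin{align*}
U_n(t,s) = e^{A(r_n)\tau_n}\dotsm e^{A(r_1)\tau_1}
\end{align*}
associated with partitions $s = r_0 < r_1 < \dotsb < r_n = t$ (with $\tau_i = r_i - r_{i-1}$ and mesh tending to zero as $n\to\infty$), and then exploit the scalar-commutator hypothesis~\eqref{eq: vertrel, p=1} to recast each $U_n(t,s)$ as a single exponential of a Riemann sum for $\int_s^t A(\tau)\,d\tau$ multiplied by an explicit scalar prefactor. Passing to the limit in $n$ should then yield the claimed representation formula directly.

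The first main step is to upgrade~\eqref{eq: vertrel, p=1} to the semigroup level, that is, to prove rigorously on $Y^{\circ}$ the Weyl-type relation
\begin{align*}
e^{A(s)\sigma}e^{A(t)\tau}y = e^{A(t)\tau}e^{A(s)\sigma}y\cdot e^{\sigma\tau\mu(s,t)} \qquad (y \in Y^{\circ}).
\end{align*}
This follows from an ODE-uniqueness argument, since both sides solve the Cauchy problem $f'(\sigma) = A(s)f(\sigma)$, $f(0) = e^{A(t)\tau}y$ -- the right-hand side by a direct computation based on~\eqref{eq: vertrel, p=1}. The argument also requires $Y^{\circ}$-invariance of each $e^{A(t)\tau}$, which itself follows from~\eqref{eq: vertrel, p=1} combined with the continuity of $t\mapsto A(t)y$ and of $\mu$. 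The very same ODE technique delivers the unbounded-operator Baker--Campbell--Hausdorff identity: whenever two semigroup generators have scalar commutator $c$ in the sense of~\eqref{eq: vertrel, p=1}, one has $e^{X}e^{Y}z = e^{X+Y}z \cdot e^{c/2}$ on a suitable core. Iterating this identity through the product $U_n$ -- applied at each step with $X = A(r_i)\tau_i$ and $Y = \sum_{j<i}A(r_j)\tau_j$, whose commutator $\sum_{j<i}\mu(r_i,r_j)\tau_i\tau_j$ is again scalar -- yields the Zassenhaus-type decomposition
\begin{align*}
U_n(t,s)y = e^{S_n(t,s)}y\cdot e^{c_n(t,s)} \qquad (y \in Y^{\circ}),
\end{align*}
where $S_n(t,s) = \sum_i A(r_i)\tau_i$ and $c_n(t,s) = \tfrac{1}{2}\sum_{i>j}\mu(r_i,r_j)\tau_i\tau_j$.

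The second step is to pass to the limit as $n\to\infty$. Continuity of $t\mapsto A(t)y$ on $Y^{\circ}$ produces $S_n(t,s)y \to \bigl(\int_s^t A(\tau)\,d\tau\bigr)^{\circ}y$ in $X$, continuity of $\mu$ produces $c_n(t,s) \to \tfrac{1}{2}\int_s^t\int_s^{\tau}\mu(\tau,\sigma)\,d\sigma\,d\tau$, and the $(M,\omega)$-stability provides the uniform bound $\norm{U_n(t,s)} \le Me^{\omega(t-s)}$. A Trotter--Kato-type argument then shows strong convergence of $e^{S_n(t,s)}$ on $Y^{\circ}$ to a bounded operator, which must be the semigroup generated by the closure $\ol{(\int_s^t A(\tau)\,d\tau)^{\circ}}$; this simultaneously establishes closability of $(\int_s^t A(\tau)\,d\tau)^{\circ}$ and identifies $U(t,s)$ with the representation claimed in the theorem.

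Finally I would verify the evolution-system axioms. The cocycle $U(t,\sigma)U(\sigma,s) = U(t,s)$ follows from the additivity of the two exponents together with the semigroup-level Weyl relation, which exactly absorbs the cross term arising from the splitting $\int_s^t = \int_s^{\sigma} + \int_{\sigma}^t$; strong continuity in $(s,t)$ is inherited from continuity of $\mu$ and of $A(\cdot)y$ on $Y^{\circ}$; and the initial-value property is obtained by differentiating the representation in $t$ and invoking~\eqref{eq: vertrel, p=1} to move $A(t)$ across the scalar factor into the generator of the first exponential. Uniqueness is already supplied by the general remark recorded after the definition of evolution systems. I expect the principal obstacle to lie in making the Zassenhaus decomposition rigorous in the unbounded setting: neither Weyl nor BCH applies off the shelf, so one must carefully track $Y^{\circ}$-invariance, invoke closedness of semigroup generators, and exploit in an essential way that the commutators are genuine scalars -- for otherwise the cross terms would cease to be central and the closed-form product would fail to collapse to a single exponential times a scalar.
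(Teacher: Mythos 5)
Your overall architecture --- product approximants, the Weyl relation $e^{A(s)\sigma}e^{A(t)\tau}=e^{A(t)\tau}e^{A(s)\sigma}e^{\mu(s,t)\sigma\tau}$, and the Zassenhaus-type collapse $U_n(t,s)=e^{\ol{S_n}}e^{c_n}$ --- all occur in the paper's proof, but the order in which you deduce things hides a genuine gap. You propose to obtain the convergence of $U_n(t,s)$ \emph{from} the convergence of $e^{\ol{S_n}}$ via a Trotter--Kato argument. This fails for general complex $\mu$: the semigroup generated by $\ol{S_n}$ is $r\mapsto U_n^r(t,s)\,e^{-\frac12(\sum_{i>j}\mu(r_i,r_j)\tau_i\tau_j)r^2}$, whose norm grows like $e^{\omega r+Cr^2}$ with $C=-\tfrac12\Re\sum_{i>j}\mu(r_i,r_j)\tau_i\tau_j$, so the uniform-in-$n$ stability bound $M'e^{\omega' r}$ on $[0,\infty)$ required by Trotter--Kato is unavailable unless $\Re\mu(\tau,\sigma)\ge 0$ for $\sigma\le\tau$; and you do not address the second Trotter--Kato hypothesis, the density of $\ran\big(\lambda-(\int_s^tA(\tau)\,d\tau)^{\circ}\big)$, which is not easy to verify directly. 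The paper records exactly these obstructions in the fifth remark of Section~2.4. A similar difficulty affects your final step: differentiating $t\mapsto e^{\ol{(\int_s^tA(\tau)\,d\tau)^{\circ}}}$ in $t$ is precisely the formal Duhamel rule~\eqref{eq: duhamel}, which is not justified at this level of generality.

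The missing ingredient is the commutation relation for the \emph{products}: iterating~\eqref{eq: vertrel, p=1} through $U_n(t,s)$ gives $A(r)U_n(t,s)y=U_n(t,s)\big(A(r)+\int_s^t\mu(r,r_n(\sigma))\,d\sigma\big)y$ for $y\in D(A(r))$, which is relation~\eqref{eq: central} of the paper. With it, the telescoping identity $U_n(t,s)y-U_m(t,s)y=\int_s^tU_m(t,\tau)\big(A(r_n(\tau))-A(r_m(\tau))\big)U_n(\tau,s)y\,d\tau$ can be rewritten so that the unbounded differences act on the \emph{fixed} vector $y\in Y^{\circ}$, and the uniform continuity of $\tau\mapsto A(\tau)y$ and of $\mu$ then makes $(U_n(t,s)y)$ Cauchy uniformly in $(s,t)$ --- with no Trotter--Kato input. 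The same relation yields the integral equation $U(t,s)y=y+\int_s^tU(\tau,s)\big(A(\tau)+\int_s^{\tau}\mu(\tau,\sigma)\,d\sigma\big)y\,d\tau$ (hence the differential equation without differentiating any operator exponential) and, by closedness of the $A(r)$, the invariance of $Y^{\circ}$. The representation formula is then derived \emph{afterwards}: the already-established convergence of the rescaled approximants $U_n^r$ replaces Trotter--Kato in identifying the generator of $\lim_n e^{\ol{S_n}r}$ with $\ol{(\int_s^tA(\tau)\,d\tau)^{\circ}}$, via a core argument based on the invariance $T(r)D(\ol{B^{\circ}})\subset D(\ol{B^{\circ}})$. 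Your Zassenhaus decomposition and the identification of the limits are correct in form, but they cannot carry the convergence and differentiability burdens you assign to them.
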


\begin{proof}
(i) We first show, in three steps, the existence of an evolution system $U$ for $A$ on~$Y^{\circ}$, which is then necessarily unique by the remark at the beginning of Section~2. 
In order to do so we approximate the sought evolution $U$ by the standard approximants $U_n$ from hyperbolic evolution equations theory, that is, we choose partitions 
\begin{align*}
\pi_n = \{ r_{n \, i}: i \in \{0, \dots, m_n \} \}
\end{align*}
of $I$ with $\operatorname{mesh}(\pi_n) \longrightarrow 0$ as $n \to \infty$ and, for any such partition, we evolve piecewise according to the values of $t \mapsto A(t)$ at the finitely many partition points of $\pi_n$. So,  
\begin{align}  \label{eq: def U_n, 1}
U_n(t,s) := e^{A(r_n(t))  (t-s)} 
\end{align}
for $(s,t) \in \Delta$ with $s$, $t$ lying in the same partition subinterval of $\pi_n$ 
and
\begin{align} \label{eq: def U_n, 2}
U_n(t,s) := e^{A(r_n(t)) (t-r_n(t))} e^{A(r_n^-(t)) (r_n(t)-r_n^-(t))} \dotsb e^{A(r_n(s)) (r_n^+(s)-s)} 
\end{align}
for $(s,t) \in \Delta$ with $s$, $t$ lying in different partition subintervals of $\pi_n$. 
In the equations above,  $r_n(u)$ for $u \in I$ denotes the largest partition point of $\pi_n$ less than or equal to $u$ and $r_n^-(u)$, $r_n^+(u)$ is the neighboring partition point below or above $r_n(u)$, respectively.
\smallskip

We then obtain, by repeatedly applying the assumed commutation relation~\eqref{eq: vertrel, p=1}, the following important commutation relation which allows us to take $A(r)$ from the left of $U_n(t,s)$ to the right and which is central to the entire proof: 
\begin{gather} \label{eq: central}
A(r) U_n(t,s)y = U_n(t,s) \Big( A(r) + \int_s^t \mu(r,r_n(\sigma)) \, d\sigma \Big) y  
\end{gather}
for all $y \in D(A(r))$. 
As a first step, we observe that 
\begin{align} \label{eq: elem eigenschaften U_n}
U_n(t,s)U_n(s,r) = U_n(t,r) \quad \text{and} \quad \norm{U_n(t,s)} \le M e^{\omega (t-s)}
\end{align}
for all $(s,t), (r,s) \in \Delta$ and that $\Delta \ni (s,t) \mapsto U_n(t,s)$ is strongly continuous.
\smallskip

As a second step, we show that $(U_n(t,s)x)$ for every $x \in X$ is a Cauchy sequence in $X$ uniformly in $(s,t) \in \Delta$. 
Since $\cap_{r' \in I} D(A(r'))$ is invariant under the semigroups $e^{A(r)\,.\,}$ for all $r \in I$ by~\eqref{eq: vertrel, p=1}, it follows that $[s,t] \ni \tau \mapsto U_m(t,\tau)U_n(\tau,s)y$ for every $y \in \cap_{r' \in I} D(A(r'))$ is piecewise continuously differentiable (with the partition points of $\pi_m \cup \pi_n$ as exceptional points) and therefore
\begin{align*}
&U_n(t,s)y - U_m(t,s)y = U_m(t,\tau)U_n(\tau,s)y \Big|_{\tau=s}^{\tau=t} \\
&\qquad = \int_s^t U_m(t,\tau) \big( A(r_n(\tau)) - A(r_m(\tau)) \big) U_n(\tau,s)y \, d\tau 
= \int_s^t U_m(t,\tau) U_n(\tau,s) \\ 
&\qquad \qquad \qquad \quad \Big(   A(r_n(\tau)) - A(r_m(\tau)) +  \int_s^{\tau} \mu(r_n(\tau), r_n(\sigma)) - \mu(r_m(\tau), r_n(\sigma)) \, d\sigma  \Big)y \, d\tau
\end{align*}
for every $y \in \cap_{r' \in I} D(A(r'))$ where, for the last equation, \eqref{eq: central} has been used. So, 
\begin{align*}
&\sup_{(s,t) \in \Delta} \norm{ U_n(t,s)y - U_m(t,s)y } \le M^2 e^{w(b-a)} \bigg( \int_a^b \norm{ A(r_n(\tau))y - A(r_m(\tau))y } \, d\tau  \\
&\qquad + \int_a^b \int_a^b \big|  \mu(r_n(\tau), r_n(\sigma)) - \mu(r_m(\tau), r_n(\sigma))  \big| \norm{y} \,d\sigma \, d\tau \bigg) 
\longrightarrow 0 \quad (m,n \to \infty)
\end{align*}
for every $y \in Y^{\circ}$ by the uniform continuity of $\tau \mapsto A(\tau)y$ and $(\tau,\sigma) \mapsto \mu(\tau,\sigma)$. And by~\eqref{eq: elem eigenschaften U_n} this uniform Cauchy property extends to all $y \in X$. Consequently, 
\begin{align*}
U(t,s)x := \lim_{n \to \infty} U_n(t,s)x 
\end{align*}
for every $x \in X$ exists uniformly in $(s,t) \in \Delta$ and hence the properties observed in the first step carry over from $U_n$ to $U$.
\smallskip

As a third step, we show that $t \mapsto U(t,s)y$ for every $y \in Y^{\circ}$ is a continuously differentiable solution to~\eqref{eq: ivp} with values in $Y^{\circ}$. 
Since $\tau \mapsto U_n(\tau,s)y$ for $y \in \cap_{r' \in I} D(A(r'))$ is piecewise continuously differentiable with piecewise derivative
\begin{align*}
[s,t] \setminus \pi_n \ni \tau \mapsto A(r_n(\tau)) U_n(\tau,s)y 
= U_n(\tau,s)\Big( A(r_n(\tau)) + \int_s^{\tau} \mu(r_n(\tau) , r_n(\sigma)) \, d\sigma \Big)y
\end{align*}
by virtue of~\eqref{eq: central}, we have
\begin{align*}
U_n(t,s)y = y + \int_s^t  U_n(\tau,s)\Big( A(r_n(\tau)) + \int_s^{\tau} \mu(r_n(\tau) , r_n(\sigma)) \, d\sigma \Big)y \, d\tau
\end{align*} 
and therefore
\begin{align*}
U(t,s)y = y + \int_s^t U(\tau,s) \Big( A(\tau) + \int_s^{\tau} \mu(\tau,\sigma) \,d\sigma \Big) y \, d\tau
\end{align*} 
for all $y \in Y^{\circ}$. 
So, $t \mapsto U(t,s)y$ is continuously differentiable for every $y \in Y^{\circ}$ 
with derivative
\begin{align*}
t \mapsto U(t,s) \Big( A(t) + \int_s^{t} \mu(t,\sigma) \,d\sigma \Big) y = \lim_{n\to \infty} A(t)U_n(t,s)y = A(t)U(t,s)y,
\end{align*}
where the last two equations hold by~\eqref{eq: central} and the closedness of $A(t)$. Also, since for all $y \in Y^{\circ}$ and $r \in I$
\begin{align*}
A(r) U_n(t,s)y \longrightarrow U(t,s) \Big( A(r) + \int_s^{t} \mu(r,\sigma) \,d\sigma \Big) y \quad (n \to \infty),
\end{align*}
we see by the closedness of the operators $A(r)$ that $U(t,s)y \in Y^{\circ}$ for $y \in Y^{\circ}$. So, in summary, we have shown that $U$ is an evolution system for $A$ on $Y^{\circ}$.
\smallskip

(ii) We now show, in three steps, that $\big( \int_s^t A(\tau) \, d\tau \big)^{\circ}$ for every fixed $(s,t) \in \Delta$ is closable and that its closure generates a strongly continuous semigroup in $X$ with
\begin{align*}
e^{ \ol{ ( \int_s^t A(\tau) \, d\tau )^{\circ} } }  = U(t,s) e^{-1/2 \int_s^t \int_s^{\tau} \mu(\tau, \sigma) \, d\sigma \, d\tau }.
\end{align*}
As a first step, we show a discrete version of the above representation formula:  
more precisely, we show that $B_n := \int_s^t A(r_n(\tau)) \,d\tau$ is closable and that $B_n$ generates a strongly continuous semigroup with the following decomposition of Zassenhaus type: 
\begin{align} \label{eq: thm 2.1 zwbeh (ii), 1} 
e^{\ol{B_n} r} = U_n^r(t,s) e^{-1/2 (\int_s^t \int_s^{\tau} \mu(r_n(\tau), r_n(\sigma)) \, d\sigma \, d\tau) r^2 } \quad (r \in [0,\infty)),
\end{align}
where the operators $U_n^r(t,s)$ are defined in the same way as the operators $U_n(t,s)$ above 
with the only difference that now the generators $A(u)$ are all multiplied by the number $r$. Indeed, by the assumed commutation relations, we obtain the following commutation relations for semigroups,
\begin{align} \label{eq: vertrel halbgr}
e^{A_i \sigma} e^{A_j \tau} = e^{A_j \tau} e^{A_i \sigma} e^{\mu_{i j} \sigma \tau} \quad (\sigma, \tau \in [0,\infty)),
\end{align}
where $A_k := A(t_k)h_k$ and $\mu_{k l} := \mu(t_k, t_l)h_k h_l$ for arbitrary $t_k, t_l \in I$ and $h_k, h_l \in [0,\infty)$. (In fact, if $y \in D(A_i)$, then 
\begin{align*}
e^{A_j \tau} e^{A_i \sigma} e^{\mu_{i j} \sigma \tau} y - e^{A_i \sigma} e^{A_j \tau} y = e^{A_i (\sigma-r)} e^{A_j \tau} e^{A_i r} e^{\mu_{i j} r \tau} y \big|_{r=0}^{r=\sigma}
\end{align*}
and $[0,\sigma] \ni r \mapsto e^{A_i (\sigma-r)} e^{A_j \tau} e^{A_i r} e^{\mu_{i j} r \tau} y$ is differentiable with derivative $0$.) 
With the help of~\eqref{eq: vertrel halbgr} one verifies that 
\begin{align} \label{eq: thm 2.1 halbgr}
[0,\infty) \ni r \mapsto e^{A_m r} \dotsb e^{A_1 r} e^{-1/2 \sum_{i \le j} \mu_{j i} r^2}
\end{align}
is a strongly continuous semigroup in $X$. As this semigroup, by the assumed commutation relation, leaves the subspace $D(A_1) \cap \dotsb \cap D(A_m)$ invariant, its generator contains the operator $A_1 + \dotsb + A_m$, which is therefore closable with closure equal to the generator. Since $B_n$ is 
of the form $A_1 + \dotsb + A_m$ and since the right-hand side of~\eqref{eq: thm 2.1 zwbeh (ii), 1} is of the form~\eqref{eq: thm 2.1 halbgr} (because $\mu_{i i} = 0$ by virtue of~\eqref{eq: vertrel halbgr}), the assertion of the first step follows.
\smallskip

As a second step, we observe that the limit $T(r)x := \lim_{n \to \infty} e^{ \ol{B_n} r } x$ exists locally uniformly in $r \in [0,\infty)$ for every $x \in X$ and that $T$ is a strongly continuous semigroup in $X$. Indeed, with the same arguments as in~(i), it follows that $(U_n^r(t,s)x)$ is convergent locally uniformly in $r$ for every $x \in X$ with limit denoted by $U^{r}(t,s)x$ and therefore the strongly continuous semigroups $e^{\ol{B_n} \,.\,}$ by~\eqref{eq: thm 2.1 zwbeh (ii), 1} are strongly convergent locally uniformly in $r$, so that
\begin{align} \label{eq: def T}
T(r)x := \lim_{n \to \infty} e^{\ol{B_n} r} x = U^r(t,s) e^{-1/2 ( \int_s^t \int_s^{\tau} \mu(\tau, \sigma) \,d\sigma \,d\tau ) r^2 } x \quad (x \in X)
\end{align}
defines a strongly continuous semigroup $T$ on $X$.   
\smallskip

As a third step, we show that the generator $A_T$ of this semigroup is given by $\ol{B^{\circ}}$ where $B^{\circ} := \big( \int_s^t A(\tau) \,d\tau \big)^{\circ}$, from which the desired representation formula for $U$ then follows by~\eqref{eq: def T} (because $U^1(t,s) = U(t,s)$).
Indeed, for all $y \in Y^{\circ}$, 
\begin{align*}
\frac{T(h)y - y}{h} = \lim_{n \to \infty} \frac{e^{\ol{B_n} h} y - y}{h} &= \lim_{n \to \infty} \frac{1}{h} \int_0^h e^{\ol{B_n} r} \, \ol{B_n} y \,dr 
= \frac{1}{h} \int_0^h T(r) B^{\circ} y \,dr \\
&\longrightarrow B^{\circ}y \quad (h \searrow 0)
\end{align*}
by the dominated convergence theorem. So, $B^{\circ}$ is closable with $\ol{B^{\circ}} \subset A_T$. 
We now want to show that $D(\ol{B^{\circ}})$ is a core for $A_T$ by verifying the invariance $T(r)D(\ol{B^{\circ}}) \subset D(\ol{B^{\circ}})$ for all $r \in [0,\infty)$. If $y \in Y^{\circ}$, then
\begin{align*}
B_m e^{ \ol{B_n} r } y = e^{ \ol{B_n} r } \big( B_m + \nu_{m,n} r  \big) y \quad \text{with} \quad 
\nu_{m,n} := \int_s^t \int_s^t \mu(r_m(\tau),r_n(\sigma)) \, d\sigma \, d\tau
\end{align*}
by the product decomposition of $e^{\ol{B_n} r }$ from~\eqref{eq: thm 2.1 zwbeh (ii), 1} and by the central commutation relation~\eqref{eq: central}. So, 
\begin{align*}
B^{\circ} e^{ \ol{B_n} r } y = e^{ \ol{B_n} r } \big( B^{\circ} + \lim_{m \to \infty} \nu_{m,n} r  \big) y
\end{align*}
for all $y \in Y^{\circ}$, from which it further follows 
that
\begin{align*}
T(r)y \in D(\ol{B^{\circ}}) \quad \text{and} \quad \ol{B^{\circ}} \, T(r)y = T(r) \big( B^{\circ} + \lim_{n \to \infty} \lim_{m \to \infty} \nu_{m,n} r  \big) y = T(r) B^{\circ}y
\end{align*}
for all $y \in Y^{\circ}$. In the last equation, we used that $\mu(\tau,\sigma) = -\mu(\sigma,\tau)$ for all $\sigma,\tau \in I$ which can be seen from~\eqref{eq: vertrel halbgr}. It follows that $\ol{B^{\circ}} T(r) \supset T(r) \ol{B^{\circ}}$ and, in particular, $T(r)D(\ol{B^{\circ}}) \subset D(\ol{B^{\circ}})$ for all $r \in [0,\infty)$. So, $D(\ol{B^{\circ}})$ is a core for $A_T$ and hence $A_T = \ol{B^{\circ}}$, as desired.
\end{proof}

We also note the following variant of the above theorem where 
the form~\eqref{eq: vertrel, p=1, direkter} of the imposed commutation relation is closer to~\eqref{eq: 1-facher kommutator skalar}. In return, one has to require relatively strong invariance conditions.

\begin{cor} \label{cor: wohlgestelltheit, p=1}
Suppose $A(t): D(A(t)) \subset X \to X$ for every $t \in I$ is the generator of a strongly continuous semigroup on $X$ such that $A$ is $(M,\omega)$-stable for some $M \in [1,\infty)$ and $\omega \in \R$. Suppose further that $Y$ is an $A(t)$-admissible subspace of $X$ for every $t \in I$ such that
\begin{align*}
Y \subset \bigcap_{\tau \in I} D(A(\tau)) \qquad \text{and} \qquad A(t)Y \subset \bigcap_{\tau \in I} D(A(\tau)),
\end{align*}
$A(t)|_Y$ is a bounded 
operator from $Y$ to $X$, and 
\begin{align} \label{eq: vertrel, p=1, direkter}
[A(s),A(t)] \big|_{D(\tilde{A}(t))} \subset \mu(s,t) \in \C
\end{align}
for all $s,t \in I$, where $\tilde{A}(t)$ is the part of $A(t)$ in $Y$.  
Suppose finally that $(s,t) \mapsto \mu(s,t)$ and $t \mapsto A(t)y$ are continuous for all $y \in Y$. 
Then the conclusions of the above theorem hold true.
\end{cor}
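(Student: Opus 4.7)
The strategy is to reduce the corollary to Theorem~\ref{thm: wohlgestelltheit, p=1} by verifying the semigroup commutation relation~\eqref{eq: vertrel, p=1}. The remaining hypotheses of that theorem follow readily from the assumptions: the $(M,\omega)$-stability of $A$ and the continuity of $\mu$ are given outright, while the density of $Y^{\circ}$ is a consequence of the inclusion $Y \subset Y^{\circ}$, which itself holds because $Y \subset \bigcap_{\tau \in I} D(A(\tau))$ and $t \mapsto A(t)y$ is continuous for every $y \in Y$.

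The technical core is the passage from the generator-level commutator~\eqref{eq: vertrel, p=1, direkter} to the semigroup-level identity
\begin{align*}
A(s)\, e^{A(t)\tau} y = e^{A(t)\tau}\bigl( A(s) + \mu(s,t)\tau \bigr) y.
\end{align*}
I would first establish this on $D(\tilde{A}(t))$, where $\tilde{A}(t)$ is the $Y$-part of $A(t)$. By $A(t)$-admissibility we have $e^{A(t)r}y \in D(\tilde{A}(t)) \subset Y \subset D(A(s))$ for every $r \ge 0$ and $y \in D(\tilde{A}(t))$, and the commutator assumption delivers $A(s)\, D(\tilde{A}(t)) \subset D(A(t))$ together with $A(s)A(t)y' - A(t)A(s)y' = \mu(s,t) y'$ for $y' \in D(\tilde{A}(t))$. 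Since $r \mapsto e^{A(t)r}y$ is strongly differentiable in $(Y,\|\cdot\|_*)$ with derivative $A(t) e^{A(t)r} y \in Y$, and $A(s)|_Y : Y \to X$ is bounded, a direct computation shows that $r \mapsto e^{A(t)(\tau - r)} A(s) e^{A(t)r} y$ is differentiable with derivative
\begin{align*}
e^{A(t)(\tau - r)} \bigl( A(s)A(t) - A(t)A(s) \bigr) e^{A(t)r} y = \mu(s,t)\, e^{A(t)\tau} y,
\end{align*}
and integrating from $r = 0$ to $r = \tau$ then yields the identity on $D(\tilde{A}(t))$.

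Next I would extend this identity to $y \in Y$ by density: $D(\tilde{A}(t))$ is dense in $(Y,\|\cdot\|_*)$ as the generator domain of the strongly continuous $Y$-semigroup $e^{A(t)\cdot}|_Y$, and for approximants $y_n \in D(\tilde{A}(t))$ with $y_n \to y$ in $(Y,\|\cdot\|_*)$ the boundedness of $A(s)|_Y : Y \to X$ ensures $A(s) y_n \to A(s) y$ in $X$, so that closedness of $A(s)$ gives the identity on $Y$. Since $Y$ is dense in $X$, invariant under each $e^{A(r)\cdot}$, and satisfies $Y \subset Y^{\circ}$, the proof of Theorem~\ref{thm: wohlgestelltheit, p=1} applies verbatim with $Y$ playing the role of the dense invariant subspace on which~\eqref{eq: vertrel, p=1} is used (the resulting evolution system being unique and agreeing on $Y^{\circ}$ by the uniqueness argument at the beginning of Section~2). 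The main obstacle is the differentiation step: one must carefully track that $A(s) e^{A(t)r} y$ lies in $D(A(t))$ so that the generator $A(t)$ can legitimately be pulled through the outer semigroup, and here the three hypotheses $A(t) Y \subset \bigcap_{\tau \in I} D(A(\tau))$, the boundedness of $A(t)|_Y$, and the scalar commutator relation on $D(\tilde{A}(t))$ each enter in a distinct and necessary role in closing the domain chase.
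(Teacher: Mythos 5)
Your reduction strategy and your core computation coincide with the paper's: the paper likewise verifies the hypotheses of Theorem~\ref{thm: wohlgestelltheit, p=1} by differentiating $[0,\tau] \ni r \mapsto e^{A_2(\tau-r)} A_1 e^{A_2 r} y$ for $y \in D(\tilde{A}_2)$ (using admissibility, the boundedness of $A_1|_Y$, and the invariance $A_1 Y \subset D(A_2)$ in exactly the roles you identify) and then extends to $y \in Y$ by density of $D(\tilde{A}_2)$ in $Y$. Your domain chase for this step is correct and in fact more explicit than the paper's.

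There is, however, a gap in your final step. You establish $A(s)e^{A(t)\tau}y = e^{A(t)\tau}(A(s)+\mu(s,t)\tau)y$ only for $y \in Y$, and then assert that the proof of Theorem~\ref{thm: wohlgestelltheit, p=1} ``applies verbatim with $Y$ playing the role of the dense invariant subspace.'' But the conclusion of that theorem concerns the \emph{maximal} continuity subspace $Y^{\circ}$ of~\eqref{eq: max stetur, p=1}, which in general strictly contains $Y$, and its proof applies the hypothesis~\eqref{eq: vertrel, p=1} — equivalently the relation~\eqref{eq: central} — to vectors in $D(A(r))$, $\cap_{r'}D(A(r'))$ and $Y^{\circ}$, not merely to vectors in $Y$. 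With the relation available only on $Y$ you cannot obtain, for instance, the invariance $U(t,s)Y^{\circ} \subset Y^{\circ}$ or the differentiability of $t \mapsto U(t,s)y$ for $y \in Y^{\circ}\setminus Y$, nor the representation formula with $(\int_s^t A(\tau)\,d\tau)^{\circ}$ defined on all of $Y^{\circ}$. Closedness of $A(s)$ alone does not let you pass from $Y$ to $D(A(s))$. The missing step — and the paper's actual route — is to first integrate your identity on $Y$ into the bounded-operator commutation relation~\eqref{eq: vertrel halbgr, cor}, namely $e^{A(s)\sigma}e^{A(t)\tau} = e^{A(t)\tau}e^{A(s)\sigma}e^{\mu(s,t)\sigma\tau}$, which extends from $Y$ to all of $X$ by density and boundedness; differentiating this at $\sigma = 0$ then recovers the inclusion~\eqref{eq: vertrel, p=1} on the \emph{full} domain $D(A(s))$, after which Theorem~\ref{thm: wohlgestelltheit, p=1} really does apply as stated.
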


\begin{proof}
We verify the assumptions of the previous theorem and, to that purpose, we establish the commutation relations
\begin{align} \label{eq: vertrel halbgr, cor}
e^{A_1 \sigma} e^{A_2 \tau} = e^{A_2 \tau} e^{A_1 \sigma} e^{\mu_{1 2} \tau \sigma} \quad (\sigma, \tau \in [0,\infty)),
\end{align}
where $A_k := A(t_k)$ and $\mu_{k l} := \mu(t_k,t_l)$ for arbitrary $t_1, t_2 \in I$. In order to see~\eqref{eq: vertrel halbgr, cor}, 
one shows that 
\begin{align}  \label{eq: cor 2.2, 2}
A_1 e^{A_2 \tau} y = e^{A_2 \tau} \big( A_1 + \mu_{1 2} \tau \big)y
\end{align} 
for $y \in Y$ by differentiating $[0,\tau] \ni r \mapsto e^{A_2(\tau-r)} A_1 e^{A_2 r} y$ for vectors $y$ in the domain of the part $\tilde{A}_2$ of $A_2$ in $Y$ which by the $A_2$-admissibility of $Y$ is the generator of the strongly continuous semigroup $t \mapsto e^{A_2 t}|_Y$ in $Y$ (Proposition~2.3 of~\cite{Kato70}). (In addition to the $A_2$-admissibility, the boundedness of $A_1|_Y$ from $Y$ to $X$ and the invariance condition $A_1 Y \subset D(A_2)$ come into play here.) 
Along the same lines as~\eqref{eq: vertrel halbgr}, the relation~\eqref{eq: vertrel halbgr, cor} then follows. And since~\eqref{eq: vertrel halbgr, cor} is equivalent to the commutation relations~\eqref{eq: vertrel, p=1} and since $Y^{\circ} \supset Y$ is dense in $X$, the assumptions of Theorem~\ref{thm: wohlgestelltheit, p=1} are satisfied, as desired.
\end{proof}

\subsection{Scalar $p$-fold commutators}

In this subsection we prove well-posedness for~\eqref{eq: ivp} in the case~\eqref{eq: p-facher kommutator skalar} where the $p$-fold commutators of the operators $A(t)$ are complex scalars for some $p \in\N$. We have to make precise the merely formal commutation relation~\eqref{eq: p-facher kommutator skalar}, of course, and we begin with a well-posedness result where~\eqref{eq: p-facher kommutator skalar} is replaced by the formally equivalent commutation relations~\eqref{eq: vertrel, allg p} for the semigroups 
$e^{A(t)\,.\,}$ with the generators $A(s_1) = C^{(0)}(s_1)$ and certain operators $C^{(k)}(s_1, \dots, s_{k+1})$ which are formally given as the $k$-fold commutator of the operators $A(s_1), \dots, A(s_{k+1})$. 
%

\begin{thm} \label{thm: wohlgestelltheit, allg p}
Suppose $A(t): D(A(t)) \subset X \to X$ for every $t \in I$ is the generator of 
a strongly continuous semigroup on $X$ such that $A$ is $(M,\omega)$-stable for some $M \in [1,\infty)$ and $\omega \in \R$ and 
such that for some closed operators $C^{(k)}(s_1, \dots, s_{k+1})$, where $k \in \{0, \dots, p-1\}$ and $C^{(0)}(s) := A(s)$, and for some complex numbers $\mu(t_1, \dots, t_{p+1}) \in \C$
\begin{gather}
C^{(k)}(\ul{s}) e^{A(t) \tau} \supset e^{A(t)\tau} \big( C^{(k)}(\ul{s}) + C^{(k+1)}(\ul{s}, t) \tau + \dotsb + C^{(p-1)}(\ul{s}, t, \dots, t) \frac{\tau^{p-1-k}}{(p-1-k)!} \,+ \notag \\
\qquad \qquad \quad + \, \mu(\ul{s}, t, \dots, t) \frac{\tau^{p-k}}{(p-k)!} \big)   \qquad (\ul{s}:= (s_1, \dots, s_{k+1}))  \label{eq: vertrel, allg p}
\end{gather}
for all $k \in \{0, \dots, p-1\}$ and $s_i, t \in I$ and $\tau \in [0,\infty)$. Suppose further that the maximal continuity subspace 
\begin{gather}
Y^{\circ} := \bigcap_{k=0}^{p-1} \{ y \in D_k: (t_1, \dots, t_{k+1}) \mapsto C^{(k)}(t_1, \dots, t_{k+1}) y \text{ is continuous} \}    \label{eq: max stetur, allg p}
\\
D_k :=  \cap_{\tau_1, \dots, \tau_{k+1} \in I} D(C^{(k)}(\tau_1, \dots, \tau_{k+1})) \notag
\end{gather}
is dense in $X$ and that $(t_1, \dots, t_{p+1}) \mapsto \mu(t_1, \dots, t_{p+1})$ is continuous. Then there exists a unique evolution system $U$ for $A$ on $Y^{\circ}$.
\end{thm}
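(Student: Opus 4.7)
The plan is to adapt the three-step proof of Theorem~\ref{thm: wohlgestelltheit, p=1} to the $p$-fold case. The approximants $U_n(t,s)$ are defined exactly as in~\eqref{eq: def U_n, 1}--\eqref{eq: def U_n, 2} using partitions $\pi_n$ of $I$ with $\operatorname{mesh}(\pi_n)\to 0$; by $(M,\omega)$-stability they satisfy the semigroup identity and the uniform bound $\norm{U_n(t,s)} \le M e^{\omega(t-s)}$, and are jointly strongly continuous in $(s,t)$.

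The decisive step is to generalize the central commutation relation~\eqref{eq: central}. Iterating~\eqref{eq: vertrel, allg p} through each exponential factor of $U_n(t,s)$ should give, for every $k \in \{0,\dots,p-1\}$, every $\underline{s} = (s_1, \dots, s_{k+1})$, and every $y$ in a suitable common domain, an identity of the form
\begin{align*}
C^{(k)}(\underline{s})\, U_n(t,s)\, y
= U_n(t,s) \Big( C^{(k)}(\underline{s}) + \sum_{j=1}^{p-1-k} R_n^{(k,j)}(\underline{s},s,t)\, y + S_n^{(k)}(\underline{s},s,t)\, y \Big),
\end{align*}
where the operator corrections $R_n^{(k,j)}$ are iterated integrals of $C^{(k+j)}(\underline{s}, r_n(\sigma_1), \dots, r_n(\sigma_j))$ over the simplex $\{s \le \sigma_1 \le \dots \le \sigma_j \le t\}$, and $S_n^{(k)}$ is a scalar iterated integral of $\mu(\underline{s}, r_n(\sigma_1), \dots, r_n(\sigma_{p-k}))$. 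The crucial point is that the tower of corrections terminates in a scalar term from $\mu$, and that on the subspace $Y^\circ$ each $C^{(k+j)}$ applied to $y$ is jointly continuous in its arguments, so all correction expressions are well-defined and bounded uniformly in $n$.

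Equipped with this relation (in particular, the $k=0$ case, which generalizes~\eqref{eq: central}), the second step is verbatim that of Theorem~\ref{thm: wohlgestelltheit, p=1}: for $y \in Y^\circ$ one writes
\begin{align*}
U_n(t,s)y - U_m(t,s)y = \int_s^t U_m(t,\tau)\big(A(r_n(\tau)) - A(r_m(\tau))\big)\, U_n(\tau,s)\, y\, d\tau
\end{align*}
and applies the $p$-fold commutation relation to move the factor $A(r_n(\tau)) - A(r_m(\tau))$ past $U_n(\tau,s)$. By the density of $Y^\circ$ in $X$, the joint uniform continuity of $(t_1,\dots,t_{k+1})\mapsto C^{(k)}(t_1,\dots,t_{k+1})y$ and of $\mu$ on the compact parameter set, and stability, this extends to a uniform Cauchy property on $\Delta$ for every $x \in X$. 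The limit $U(t,s)x := \lim_n U_n(t,s)x$ then inherits the semigroup identity, strong continuity, and the stability bound. Passing to the limit in the integrated form $U_n(t,s)y = y + \int_s^t U_n(\tau,s)(A(r_n(\tau)) + \text{corrections})\, y\, d\tau$ shows that $t \mapsto U(t,s)y$ is continuously differentiable with derivative $A(t)U(t,s)y$, because the corrections vanish as $\tau \to s$; invariance $U(t,s)Y^\circ \subset Y^\circ$ follows from the closedness of the $C^{(k)}(\underline{s})$ and the commutation relation. Uniqueness is then automatic from the general argument at the beginning of Section~2.

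The main obstacle is the first task: setting up the right bookkeeping for the iterated commutations through the product $e^{A(r_m)\tau_m}\cdots e^{A(r_1)\tau_1}$, together with a careful check that the intermediate vectors lie in the domains of the successively applied $C^{(k+j)}$'s so that the commutation identity~\eqref{eq: vertrel, allg p} is legitimately applicable. Once the combinatorics of the nested iterated integrals of $C^{(k+j)}$- and $\mu$-terms is encoded cleanly, everything else is an essentially mechanical transcription of the argument for $p=1$, using that $Y^\circ$ as defined in~\eqref{eq: max stetur, allg p} is tailored exactly to make all appearing correction expressions continuous.
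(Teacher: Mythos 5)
Your proposal follows the paper's proof essentially verbatim: the same approximants $U_n$, the same generalized commutation relations for moving $C^{(k)}(\ul{s})$ past $U_n(t,s)$ (the paper's \eqref{eq: vorbeiziehrel 2, allg p} and \eqref{eq: vorbeiziehrel 3, allg p}, where the combinatorial normalization $\alpha_{j_1,\dots,j_l}$ resolves the bookkeeping you flag as the main obstacle), the same reduction of the Cauchy and differentiability steps to the $p=1$ argument, and the same closedness argument for the invariance of $Y^{\circ}$. The one slight imprecision is your justification of the derivative identity: it follows from the $k=0$ commutation relation together with the closedness of $A(t)$, not from the corrections vanishing as $\tau \to s$, but this is exactly the mechanism you have already set up.
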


\begin{proof}
We define $U_n$ as in~\eqref{eq: def U_n, 1} and~\eqref{eq: def U_n, 2} and, for $u \in I$, we define $i_{n\, u}$ to be the index $i \in \{0, \dots, m_n\}$ with $u \in [r_{n\,i}, r_{n\,i+1})$. 
We then obtain, by the assumed commutation relations~\eqref{eq: vertrel, allg p}, the following important commutation relation which allows us to take the operators $A(r)$ from the left of $U_n(t,s)$ to the right: 
\begin{gather} \label{eq: vorbeiziehrel 2, allg p}
A(r) U_n(t,s)y = U_n(t,s) \big( A(r) + S^{(1)}_n(t,s,r) + \dotsb + S^{(p)}_n(t,s,r) \big)y \\
S^{(l)}_n(t,s,r) :=  \int_s^t \int_s^{t_n(\tau_1)} \dotsb \int_s^{ t_n(\tau_{l-1}) } C^{(l)}(r, r_n(\tau_1), 
\dots, r_n(\tau_l)) \big/ \alpha_{i_{n\,\tau_1}, \dots, i_{n\,\tau_l}} \, d\tau_l \dots d\tau_2 \, d\tau_1    \notag
\end{gather}
for all $y \in Y^{\circ}$ and $r \in I$, $(s,t) \in \Delta$ and $l \in \{1,\dots, p\}$, where $C^{(p)} := \mu$ and $t_n(\tau) := \min\{r_n^+(\tau), t\}$ 
for $\tau \in I$ and where $\alpha_{j_1, \dots, j_l}$ for an $l$-tupel $(j_1, \dots, j_l)$ of natural numbers denotes the number of permutations $\sigma$ leaving the $l$-tupel invariant, that is, 
\begin{align*}
(j_{\sigma(1)}, \dots, j_{\sigma(l)}) = (j_1, \dots, j_l).
\end{align*} 
(In verifying~\eqref{eq: vorbeiziehrel 2, allg p}, it is best to write 
$A(r) = A_r = C^{(0)}_r$ and $U_n(t,s) = e^{A_m h_m} \dotsb e^{A_1 h_1}$  with $A_j = A(s_j)$ and to prove by induction over $m \in \N$, 
with the help of the assumed commutation relations, that
\begin{gather*}
A_r e^{A_m h_m} \dots e^{A_1 h_1} y = e^{A_m h_m} \dots e^{A_1 h_1} \big( A_r + S^{(1)} + \dotsb + S^{(p)} \big) y \\
S^{(l)} := \sum_{1 \le j_l \le \dotsb \le j_1 \le m} C^{(l)}_{r; j_1, \dots, j_l} \big / \alpha_{j_1, \dots, j_l} \,\, h_{j_1} \dotsb h_{j_l}
\quad \text{with} \quad C^{(l)}_{r; j_1, \dots, j_l} := C^{(l)}(r,s_{j_1}, \dots, s_{j_l}).
\end{gather*}
It is easy to see that the sums $S^{(l)}$ are nothing but the integrals $S^{(l)}_n(t,s,r)$ in~\eqref{eq: vorbeiziehrel 2, allg p} and therefore~\eqref{eq: vorbeiziehrel 2, allg p} follows.)
With the help of the commutation relation~\eqref{eq: vorbeiziehrel 2, allg p}, the continuity of the maps $(t_1, \dots, t_{k+1}) \mapsto C^{(k)}(t_1, \dots, t_{k+1})y$ for $y \in Y^{\circ}$, the fact that $\alpha_{i_{n\,\tau_1}, \dots, i_{n\,\tau_k}} \longrightarrow k!$ as $n \to \infty$ for every $(\tau_1, \dots, \tau_k) \in I^k$ with $\tau_1 > \dotsb > \tau_k$, and the closedness of the operators $A(r)$, we see in the same way as in the proof of Theorem~\ref{thm: wohlgestelltheit, p=1} that
\begin{itemize}
\item $(U_n(t,s)x)$ is a Cauchy sequence in $X$ uniformly in $(s,t) \in \Delta$ for every $x \in X$ with limit denoted by $U(t,s)x$,
\item $U(t,s)U(s,r) = U(t,r)$ for every $(r,s), (s,t) \in \Delta$ and $(s,t) \mapsto U(t,s)$ is strongly continuous,
\item $[s,1] \ni t \mapsto U(t,s)y$ for every $y \in Y^{\circ}$ is a continuously differentiable solution to~\eqref{eq: ivp}.
\end{itemize}
Consequently, $U$ is at least an evolution system for $A$ on $Y^{\circ}$ in the wide sense, and it remains to show that $[s,1] \ni t \mapsto U(t,s)y$ has values in $Y^{\circ}$ for every $y \in Y^{\circ}$. In order to do so one establishes, using the same arguments as for~\eqref{eq: vorbeiziehrel 2, allg p}, the commutation relation
\begin{align}  \label{eq: vorbeiziehrel 3, allg p}
C^{(k)}(\ul{r}) U_n(t,s)y = U_n(t,s) \big( C^{(k)}(\ul{r}) + S^{(k+1)}_n(t,s,\ul{r}) + \dotsb + S^{(p)}_n(t,s,\ul{r}) \big)y
\end{align}
for all $y \in Y^{\circ}$ and $\ul{r} \in I^{k+1}$, $(s,t) \in \Delta$ and $k \in \{0, \dots, p-1\}$, where $S_n^{(k+l)}(t,s,\ul{r})$ is defined as the integral of 
\begin{align*}
(\tau_1, \dots, \tau_l) \mapsto C^{(k+l)}(\ul{r}, r_n(\tau_1), \dots, r_n(\tau_l)) \big/ \alpha_{i_{n\,\tau_1}, \dots, i_{n\,\tau_l}}
\end{align*}
over the same domain of integration as in the definition of $S^{(l)}_n(t,s,r)$ in~\eqref{eq: vorbeiziehrel 2, allg p}. 
Since the operators $C^{(k)}(\ul{r})$ are closed for $\ul{r} \in I^{k+1}$ and $k \in \{0, \dots, p-1\}$ by assumption, 
it follows from~\eqref{eq: vorbeiziehrel 3, allg p} that for every $y \in Y^{\circ}$ and $k \in \{0, \dots, p-1\}$ one has: 
\begin{align*}
U(t,s)y \in D(C^{(k)}(\ul{r})) \text{\, for every } \ul{r} \in I^{k+1} \text{\, and \,} \ul{r} \mapsto C^{(k)}(\ul{r}) U(t,s)y \text{\, is continuous}
\end{align*}
or, in other words, that $U(t,s)y \in Y^{\circ}$ for every $y \in Y^{\circ}$, as desired.
\end{proof}

We also note the following variant of the above theorem where the form~\eqref{eq: vertrel, allg p, direkter} of the imposed commutation relation is closer to~\eqref{eq: p-facher kommutator skalar}. In return, one has to require relatively strong invariance conditions.

\begin{prop} \label{prop: wohlgestelltheit, allg p}
Suppose $A(t): D(A(t)) \subset X \to X$ for every $t \in I$ is the generator of a strongly continuous semigroup on $X$ such that $A$ is $(M,\omega)$-stable for some $M \in [1,\infty)$ and $\omega \in \R$ and recursively define $C^{(0)}(t) := A(t)$ as well as $C^{(k)}(t_1, \dots, t_{k+1}) := [C^{(k-1)}(t_1, \dots, t_k), A(t_{k+1})]$ for $k \in \N$. Suppose further that $Y$ is an $A(t)$-admissible subspace of $X$ for every $t \in I$, and $p \in \N$ a natural number such that for all $t_i \in I$
\begin{align*}
Y \subset \bigcap_{\tau_1, \dots, \tau_p \in I} D(C^{(p-1)}(\tau_1, \dots, \tau_p)) \qquad \text{and} \qquad C^{(p-1)}(t_1, \dots, t_p) Y \subset \bigcap_{\tau \in I} D(C^{(0)}(\tau)),
\end{align*}
$C^{(k)}(t_1, \dots, t_{k+1})|_Y$ is a bounded 
operator from $Y$ to $X$ for all $k \in \{ 0, \dots, p-1\}$, and 
\begin{align} \label{eq: vertrel, allg p, direkter}
C^{(p)}(t_1, \dots, t_{p+1}) \big|_{D(\tilde{A}(t_{p+1}))} \subset \mu(t_1, \dots, t_{p+1}) \in \C,
\end{align}
where $\tilde{A}(t)$ is the part of $A(t)$ in $Y$.
Suppose finally that $(t_1, \dots, t_{p+1}) \mapsto \mu(t_1, \dots, t_{p+1})$ and $(t_1, \dots, t_{k+1}) \mapsto C^{(k)}(t_1, \dots, t_{k+1})y$ are continuous for all $y \in Y$ and $k \in \{0, \dots, p-1\}$. Then there exists a unique evolution system $U$ in the wide sense for $A$ on $Y$. 
\end{prop}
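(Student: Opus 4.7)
The plan is to reduce Proposition~\ref{prop: wohlgestelltheit, allg p} to Theorem~\ref{thm: wohlgestelltheit, allg p}, in direct analogy with the way Corollary~\ref{cor: wohlgestelltheit, p=1} was deduced from Theorem~\ref{thm: wohlgestelltheit, p=1}. Stability and the continuity of $\mu$ and of $(t_1, \dots, t_{k+1}) \mapsto C^{(k)}(t_1, \dots, t_{k+1}) y$ for $y \in Y$ are assumed outright, closedness of each $C^{(k)}(\underline{s})$ for $k \le p-1$ follows from the recursive definition together with the bounded-from-$Y$-to-$X$ hypothesis, and density of $Y^{\circ}$ is automatic because $Y \subset Y^{\circ}$ and $Y$ is dense in $X$ by $A(t)$-admissibility. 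The only serious task is therefore to deduce the semigroup commutation relations~\eqref{eq: vertrel, allg p} from the purely infinitesimal scalarity~\eqref{eq: vertrel, allg p, direkter}.

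I would prove~\eqref{eq: vertrel, allg p} by downward induction on $k$, from $k = p-1$ down to $k = 0$. For the base case $k = p-1$, I fix $\sigma \ge 0$ and $y \in D(\tilde{A}(t))$ and consider
\[
h(\tau) := e^{A(t)(\sigma-\tau)}\, C^{(p-1)}(\underline{s})\, e^{A(t)\tau} y \qquad (\tau \in [0,\sigma]).
\]
By the $A(t)$-admissibility of $Y$, the restriction $e^{A(t)\,\cdot\,}|_Y$ is a strongly continuous semigroup in $Y$ with generator $\tilde{A}(t)$ (Proposition~2.3 of~\cite{Kato70}), so $e^{A(t)\tau} y \in D(\tilde{A}(t)) \subset Y$ stays in the domain of the bounded operator $C^{(p-1)}(\underline{s})|_Y \colon Y \to X$; differentiating then yields $h'(\tau) = e^{A(t)(\sigma-\tau)}\, [C^{(p-1)}(\underline{s}), A(t)]\, e^{A(t)\tau} y$, which by~\eqref{eq: vertrel, allg p, direkter} equals the constant $\mu(\underline{s}, t)\, e^{A(t)\sigma} y$, and integrating from $0$ to $\sigma$ produces the identity at level $p-1$. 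For the inductive step from $k+1$ to $k$, the same $h(\tau)$ (now at level $k$) satisfies $h'(\tau) = e^{A(t)(\sigma-\tau)}\, C^{(k+1)}(\underline{s}, t)\, e^{A(t)\tau} y$ by the recursive definition of $C^{(k+1)}$; inserting the inductive hypothesis~\eqref{eq: vertrel, allg p} at level $k+1$ with parameter tuple $(\underline{s}, t)$ into the inner factor, integrating $\tau$ from $0$ to $\sigma$, and reindexing the resulting sum then yields precisely the polynomial on the right-hand side of~\eqref{eq: vertrel, allg p} at level $k$. The identity so obtained on $D(\tilde{A}(t))$ extends to all of $Y$ by denseness of $D(\tilde{A}(t))$ in $Y$ together with boundedness of $C^{(k)}|_Y$, and the operator inclusion $\supset$ stated in~\eqref{eq: vertrel, allg p} finally follows by closedness of $C^{(k)}(\underline{s})$.

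The principal obstacle will be the careful bookkeeping of domains throughout the induction: at each differentiation step $e^{A(t)\tau} y$ must remain in the domain of the operator being commuted past, and every product $C^{(j)}(\cdots) A(t) e^{A(t)\tau} y$ must be meaningful. Precisely the joint hypotheses of $A(t)$-admissibility of $Y$, boundedness of $C^{(k)}|_Y \colon Y \to X$ for every $k \le p-1$, and the invariance $C^{(p-1)}(t_1, \dots, t_p) Y \subset \bigcap_{\tau \in I} D(A(\tau))$ are what make these compositions well-defined and keep the entire induction on $D(\tilde{A}(t)) \subset Y$ rigorous. Once~\eqref{eq: vertrel, allg p} has been secured, Theorem~\ref{thm: wohlgestelltheit, allg p} applies and produces a unique evolution system $U$ for $A$ on $Y^{\circ}$; its restriction to initial data in $Y \subset Y^{\circ}$ furnishes a wide-sense evolution system for $A$ on $Y$, and uniqueness in the wide sense on $Y$ follows from the usual comparison argument sketched at the beginning of Section~2, applied to $\tau \mapsto U(t,\tau) V(\tau,s) y$ for $y \in Y$ and a competing wide-sense evolution $V$.
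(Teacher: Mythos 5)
Your induction establishing the semigroup commutation relations (downward from $k=p-1$ to $k=0$, differentiating $\tau \mapsto e^{A(t)(\sigma-\tau)}C^{(k)}(\ul{s})e^{A(t)\tau}y$ on $D(\tilde{A}(t))$ and using the argument of~\eqref{eq: cor 2.2, 2} at each step) is exactly what the paper does, and the domain bookkeeping you describe is the right concern. But the reduction to Theorem~\ref{thm: wohlgestelltheit, allg p} as a black box fails at the point you dispatch most quickly: that theorem assumes the operators $C^{(k)}(s_1,\dots,s_{k+1})$ are \emph{closed}, and your claim that closedness "follows from the recursive definition together with the bounded-from-$Y$-to-$X$ hypothesis" is false. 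Boundedness from $(Y,\norm{\,.\,}_Y)$ to $X$ gives no closedness in $X$, and the operator-theoretic commutator $D(AB)\cap D(BA)$ is not closed in general; the sixth remark of Section~2.4 exhibits operators satisfying every hypothesis of the Proposition for which none of the $C^{(k)}$ is closed. Closedness is used in Theorem~\ref{thm: wohlgestelltheit, allg p} precisely to prove the invariance of $Y^{\circ}$, i.e.\ the strict-sense conclusion — which is exactly the conclusion the Proposition deliberately does not claim. The correct route (the paper's) is not to invoke Theorem~\ref{thm: wohlgestelltheit, allg p} but to re-run the existence part of its proof, which only needs the commutation relations on vectors of $Y$ and yields a wide-sense evolution system on $Y$.

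Your uniqueness argument has a second gap of the same flavour. The comparison $\tau \mapsto U(t,\tau)V(\tau,s)y$ from the beginning of Section~2 requires $V(\tau,s)y \in Y$ in order to differentiate $U(t,\cdot)$ along the trajectory of $V$; for a competing \emph{wide-sense} evolution system $V$ this membership is exactly what is not guaranteed. The paper instead compares $V$ with the discrete approximants $U_n$, whose trajectories do stay in $Y$ because the exponentials $e^{A(r)h}$ leave $Y$ invariant by admissibility, and shows
\begin{align*}
U_n(t,s)y - V(t,s)y = \int_s^t V(t,\tau)\big(A(r_n(\tau)) - A(\tau)\big)U_n(\tau,s)y\,d\tau \longrightarrow 0
\end{align*}
using~\eqref{eq: vorbeiziehrel 2, allg p}. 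You should replace both the black-box appeal and the final comparison argument accordingly.
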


\begin{proof}
We recall that, by our convention from the beginning of Section~2, the commutators $C^{(k)}(t_1, \dots, t_{k+1})$ are to be understood in the operator-theoretic sense, and we can therefore conclude that 
\begin{align*}
Y \subset \bigcap_{\tau_1, \dots, \tau_{k+1} \in I} D(C^{(k)}(\tau_1, \dots, \tau_{k+1})) \qquad \text{and} \qquad C^{(k)}(t_1, \dots, t_{k+1}) Y \subset \bigcap_{\tau \in I} D(C^{(0)}(\tau)),
\end{align*}
for all $k \in \{0, \dots, p-1\}$ by successively proceeding from $p-1$ to $0$. 
With this in mind, one verifies the commutation relations
\begin{gather}
C^{(k)}(\ul{s}) e^{A(t) \tau}y = e^{A(t)\tau} \big( C^{(k)}(\ul{s}) + C^{(k+1)}(\ul{s}, t) \tau + \dotsb + C^{(p-1)}(\ul{s}, t, \dots, t) \frac{\tau^{p-1-k}}{(p-1-k)!} \, + \notag \\
+ \, \mu(\ul{s}, t, \dots, t) \frac{\tau^{p-k}}{(p-k)!} \big)y \qquad (\ul{s}:= (s_1, \dots, s_{k+1}))  \label{eq: vertrel, allg p, unhandl}
\end{gather}
for all $y \in Y$ and $k \in \{0, \dots, p-1\}$ by proceeding from $p-1$ to $0$ and by using, at each successive step, the same arguments as for~\eqref{eq: cor 2.2, 2}. And from~\eqref{eq: vertrel, allg p, unhandl}, in turn, one obtains the existence of an evolution system $U$ in the wide sense for $A$ on $Y$ in exactly the same way as in the proof of Theorem~\ref{thm: wohlgestelltheit, allg p}. (It is not to be expected, however, that $U$ is even an evolution system for $A$ on $Y$ in the strict sense. See the sixth remark in Section~2.4.) In order to obtain uniqueness, one has only to observe that for any evolution system $V$ in the wide sense for $A$ on $Y$,
\begin{align*}
U_n(t,s)y - V(t,s)y = V(t,\tau) U_n(\tau,s)y \big|_{\tau=s}^{\tau=t} = \int_s^t V(t,\tau)\big( A(r_n(\tau))-A(\tau)\big) U_n(\tau,s)y \, d\tau 
\end{align*} 
converges to $0$ for every $y \in Y$ and $(s,t) \in \Delta$ by 
\eqref{eq: vorbeiziehrel 2, allg p}. 
\end{proof}

\subsection{Well-posedness for group generators}

After having proved well-posedness results for semigroup generators with~\eqref{eq: 1-facher kommutator skalar} or~\eqref{eq: p-facher kommutator skalar}, we now improve, inspired by~\cite{Kobayasi79}, the special well-posedness result from~\cite{Kato70} (Theorem~5.2 in conjunction with Remark~5.3) for a certain kind of group (instead of semigroup) generators $A(t)$ and certain uniformly convex subspaces $Y$ of the domains $D(A(t))$: we show that this result is still valid if $t \mapsto A(t)|_Y$ is assumed to be only strongly continuous (instead of norm continuous as in~\cite{Kato70}). In~\cite{Kobayasi79} the same is done for the general well-posedness theorem from~\cite{Kato70} (Theorem~6.1). We point out that although several arguments from~\cite{Kobayasi79} can be used here as well, it is by no means obvious that the improvement made in~\cite{Kobayasi79} can be carried over to the special well-posedness result of~\cite{Kato70}. In particular, the possibility of such an improvement is not mentioned in the literature -- at least, not in~\cite{Kobayasi79}, \cite{Yagi79}, \cite{Yagi80}, \cite{Kato85}, \cite{Kato93}, \cite{Tanaka00}, \cite{Tanaka01}.

\begin{thm} \label{thm: wohlg, grerz}
Suppose $A(t): D(A(t)) \subset X \to X$ for every $t \in I$ is the generator of a strongly continuous group on $X$ such that $A^+ := A(\,.\,)$ and $A^- := -A(1-\,.\,)$ are $(M,\omega)$-stable for some $M \in [1,\infty)$ and $\omega \in \R$. Suppose further that $Y$ for every $t \in I$ is an $A^{\pm}(t)$-admissible subspace of $X$ contained in $\cap_{\tau \in I} D(A(\tau))$ 
and that $A(t)|_Y$ is a bounded operator from $Y$ to $X$ such that 
\begin{align*}
t \mapsto A(t)|_Y
\end{align*} 
is strongly continuous. And finally, suppose there exists for each $t \in I$ a norm $\norm{\,.\,}_t^{\pm}$ on~$Y$ equivalent to the original norm of $Y$ such that $Y_t^{\pm} := (Y, \norm{\,.\,}_t^{\pm})$ is uniformly convex and
\begin{align} \label{eq: normen komp}
\norm{y}_t^{\pm} \le e^{c^{\pm} |t-s|} \norm{y}_s^{\pm} \quad (y \in Y \text{ and } s,t \in I)
\end{align}
for some constant $c^{\pm} \in (0,\infty)$ and such that the $Y$-part $\tilde{A}^{\pm}(t)$ of $A^{\pm}(t)$ generates a quasicontraction semigroup in $Y_t^{\pm}$, more precisely
\begin{align} \label{eq: qkontrgr in Y_t}
\norm{e^{\tilde{A}^{\pm}(t) \tau}y}_t^{\pm} 
\le  e^{\omega_0 \tau} \norm{y}_t^{\pm} \quad (\tau \in [0,\infty), y \in Y, t \in I)
\end{align}
for some $t$-independent growth exponent 
$\omega_0 \in \R$. Then there exists a unique evolution system $U$ for $A$ on $Y$.
\end{thm}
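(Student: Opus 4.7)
The plan is to construct the sought evolution system $U$ as the limit in $X$ of the standard piecewise-constant approximants
\begin{align*}
U_n(t,s) = e^{A(r_{n\,m})\tau_m} \dotsb e^{A(r_{n\,1})\tau_1}
\end{align*}
built from partitions $\pi_n$ of $I$ with mesh tending to $0$, exactly as in~\eqref{eq: def U_n, 1}, \eqref{eq: def U_n, 2}. The $(M,\omega)$-stability of $A^+$ immediately gives $\norm{U_n(t,s)} \le M e^{\omega(t-s)}$ in $X$. What is genuinely new compared with Section~2.1 is that we must also control $U_n(t,s)y$ in the time-varying $Y$-norms $\norm{\,.\,}_t^{\pm}$ in order to eventually show $U(t,s)y \in Y$; this is where uniform convexity and the compatibility estimates~\eqref{eq: normen komp}, \eqref{eq: qkontrgr in Y_t} enter, following the strategy of~\cite{Kobayasi79}.

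First I would prove a uniform $Y$-bound for the approximants: the $A^+(t)$-admissibility of $Y$ together with~\eqref{eq: qkontrgr in Y_t} tells us that each factor $e^{A(r_{n\,i}) \tau_i}$ is bounded on $(Y,\norm{\,.\,}_{r_{n\,i}}^+)$ by $e^{\omega_0 \tau_i}$, and chaining these estimates by means of~\eqref{eq: normen komp} telescopes to
\begin{align*}
\norm{U_n(t,s)y}_t^+ \le e^{(\omega_0 + c^+)(t-s)} \norm{y}_s^+ \qquad (y \in Y),
\end{align*}
uniformly in $n$. Next I would establish the Cauchy property in $X$ by the familiar identity
\begin{align*}
U_n(t,s)y - U_m(t,s)y = \int_s^t U_m(t,\tau)\big( A(r_n(\tau)) - A(r_m(\tau)) \big) U_n(\tau,s)y\, d\tau,
\end{align*}
valid for $y \in Y$. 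The uniform $Y$-bound of step one plus the strong continuity of $t \mapsto A(t)|_Y$ from $Y$ to $X$, applied to the relatively compact-like set $\{ U_n(\tau,s)y \}$ (bounded in $Y$), makes the integrand vanish in $L^1$ as $m,n \to \infty$. This yields $U(t,s)y := \lim U_n(t,s)y$ uniformly in $(s,t) \in \Delta$; by the $X$-bound and density, $U(t,s)$ extends to a strongly continuous family on $X$ satisfying $U(t,s)U(s,r)=U(t,r)$.

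The main obstacle is the third step: showing $U(t,s)y \in Y$ and that $t \mapsto U(t,s)y$ is a continuously differentiable $Y$-valued solution of~\eqref{eq: ivp}. Under only strong continuity of $t \mapsto A(t)|_Y$, one cannot push the Cauchy argument through in $Y$. Instead, use that $Y_t^+$ is uniformly convex hence reflexive: the uniform bound from step one lets us extract a weak limit $U_{n_k}(t,s)y \rightharpoonup z$ in $Y_t^+$, and the embedding $Y \hookrightarrow X$ together with the strong convergence $U_n(t,s)y \to U(t,s)y$ in $X$ identifies $z = U(t,s)y$. Weak lower semicontinuity of the norm gives the analogue of the $Y$-bound for $U(t,s)$ itself, and a Radon--Riesz type argument (weak convergence plus norm convergence implies strong convergence in uniformly convex spaces) upgrades weak continuity to norm continuity of $[s,1] \ni t \mapsto U(t,s)y$ in $Y$. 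The differential equation is then recovered by passing to the limit in the integrated form
\begin{align*}
U_n(t,s)y = y + \int_s^t U_n(\tau,s) A(r_n(\tau)) y\, d\tau + \text{error terms},
\end{align*}
once one verifies along the way that the orbit $\tau \mapsto U(\tau,s)y$ stays in $Y$ with $Y$-bounded norm.

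Finally, uniqueness: here the group structure intervenes. Running the same construction for the family $A^-$, which by hypothesis is $(M,\omega)$-stable and admits the same $Y$-structure, produces a backward evolution. Given any two evolution systems $U$ and $V$ for $A$ on $Y$, the now-classical computation $V(t,s)y - U(t,s)y = U(t,\tau)V(\tau,s)y|_{\tau=s}^{\tau=t}$ from the beginning of Section~2 (which requires only that $U$ is a strongly continuous, and $V(\tau,s)y$ a $Y$-valued differentiable, solution) gives $U = V$ on the dense subspace $Y$, hence on $X$.
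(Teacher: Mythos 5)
Your overall architecture (product approximants, uniform $Y$-bounds via \eqref{eq: normen komp} and \eqref{eq: qkontrgr in Y_t}, uniform convexity to upgrade weak to strong convergence in $Y$) matches the paper's Kobayasi-style strategy, but two steps contain genuine gaps. The first is the Cauchy property in $X$ along a \emph{fixed} sequence of partitions with mesh tending to zero. In the identity
\begin{align*}
U_n(t,s)y - U_m(t,s)y = \int_s^t U_m(t,\tau)\big( A(r_n(\tau)) - A(r_m(\tau)) \big) U_n(\tau,s)y\, d\tau
\end{align*}
the vectors $U_n(\tau,s)y$ vary with $n$ and $\tau$, so you need $\norm{(A(r_n(\tau))-A(r_m(\tau)))z}$ to be small \emph{uniformly} over the set $\{U_n(\tau,s)y\}$. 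That set is bounded in $Y$ but not relatively compact, and strong continuity of $t \mapsto A(t)|_Y$ only gives uniformity on compact subsets of $Y$; your phrase ``relatively compact-like'' papers over exactly the obstruction that makes the passage from norm continuity (Kato) to strong continuity (Kobayasi) nontrivial. The paper does not use a fixed partition sequence: following \cite{Kobayasi79}, it first proves a convergence lemma for products along arbitrary strictly increasing sequences $(t_k)$ (both in $X$ and in $Y$), and then uses Lemmas~2 and~3 of \cite{Kobayasi79} to construct, for each $y$ and $s$ separately, an adapted sequence of partitions $\pi^{\pm}_{y,s,n}$ along which the products converge. Without this adaptive selection the existence argument does not close.

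The second gap is in your Radon--Riesz step. Weak lower semicontinuity of the norm gives $\norm{U(t,s)y}_{\,\cdot}^{+} \le \liminf_n \norm{U_n(t,s)y}_{\,\cdot}^{+}$, which is the \emph{wrong} direction; to conclude norm convergence in the uniformly convex space you need the reverse bound $\limsup_n \norm{U_n(t,s)y}_{\,\cdot}^{\mp} \le \norm{U(t,s)y}_{\,\cdot}^{\mp}$, and you never derive it. The paper obtains it (see \eqref{eq: limsup U(t_k',t_0,pi)y}) by factoring $U^{\pm}(t_k',t_0,\pi) = U^{\pm}(t_k',t_{\infty},\pi)\,U^{\pm}(t_{\infty},t_0,\pi)$, where $U^{\pm}(t_k',\tau,\pi)$ is the \emph{inverse} of the forward product, i.e.\ a product of the backward semigroups $e^{A^{\mp}(\ol{t}_i)\,\cdot}$, whose norms in $\norm{\,\cdot\,}^{\mp}_{\ol{t}_{\infty}}$ are controlled by \eqref{eq: normen komp}, \eqref{eq: qkontrgr in Y_t} and tend to $1$ as $k \to \infty$. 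So the group structure and the hypotheses on $A^-$ are indispensable already for \emph{existence} (and again for the left continuity of $t \mapsto U^+(t,s)y$ in $Y$ at the end of the proof), not merely for uniqueness as your last paragraph suggests; uniqueness itself follows from the general argument at the beginning of Section~2 without any group assumption.
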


\begin{proof}
We adopt from~\cite{Kobayasi79} the shorthand notation $U^{\pm}(t,s,\pi)$ for products of the semigroups $e^{A^{\pm}(t)\,.\,}$ associated with finite or infinite partitions $\pi$ in $I$. Without further specification, convergence or continuity in $X$, $Y$ will always mean convergence or continuity in 
the norm of $X$, $Y$.
\smallskip

As a first step we show that for each $y \in Y$ and $s \in [0,1)$ there exists a sequence $(\pi_n^{\pm}) = (\pi_{y,s,n}^{\pm})$ of 
partitions of $I$ such that $(U^{\pm}(t,s,\pi_{y,s,n}^{\pm})y)$ is a Cauchy sequence in $X$ for $t \in [s,1]$. What we have to show here is that for every sequence $\pi = (t_k)$, strictly monotonically increasing in $I$, and arbitrary $t_k' \in [t_k, t_{k+1})$, the following assertions are satisfied (Lemma~1 of~\cite{Kobayasi79}): 
\begin{itemize}
\item[(i)] $(U^{\pm}(t_k',t_0,\pi)x)$ is a Cauchy sequence in $X$ for every $x \in X$ whose limit will be denoted by $U^{\pm}(t_{\infty},t_0,\pi)x$ where $t_{\infty} := \lim_{k \to \infty} t_k'$, 
\item[(ii)] $(U^{\pm}(t_k',t_0,\pi)y)$ is a Cauchy sequence in $Y$ for every $y \in Y$.   
\end{itemize}
With the help of Lemma~2 and~3 of~\cite{Kobayasi79}, whose proofs carry over without change to the present situation, the existence of sequences $(\pi_{y,s,n}^{\pm})$ of partitions with the claimed properties then follows. 
Assertion~(i) is simple and is proven in the same way as in~\cite{Kobayasi79}, while assertion~(ii) has to be proven in a completely different way because the proof of~\cite{Kobayasi79} essentially rests on the existence of certain isomorphisms $S(t)$ from $Y$ onto $X$ which are not available here. 
We show, using ideas from~\cite{Kato70} (Section~5), that 
\begin{align} \label{eq: U(t_k',t_0,pi)y schwach konv}
U^{\pm}(t_{\infty},t_0,\pi)y \in Y \quad \text{and} \quad U^{\pm}(t_k',t_0,\pi)y \longrightarrow U^{\pm}(t_{\infty},t_0,\pi)y 
\quad \text{weakly in } Y
\end{align} 
for every $y \in Y$ and that
\begin{align}  \label{eq: limsup U(t_k',t_0,pi)y}
\limsup_{k \to \infty} \norm{ U^{\pm}(t_k',t_0, \pi)y }_{\ol{t}_{\infty}}^{\mp} \le \norm{ U^{\pm}(t_{\infty},t_0, \pi)y }_{\ol{t}_{\infty}}^{\mp} \quad (\ol{t}_{\infty} := 1-t_{\infty})
\end{align}
for $y \in Y$, which two things by the uniform convexity of $Y_{\ol{t}_{\infty}}$ imply the convergence 
\begin{align*}
U^{\pm}(t_k',t_0,\pi)y \longrightarrow U^{\pm}(t_{\infty},t_0,\pi)y  \text{\, in } Y 
\end{align*}
and in particular assertion~(ii).
In order to see~\eqref{eq: U(t_k',t_0,pi)y schwach konv} notice first that $\tilde{A}^{\pm}$ 
is $(\tilde{M}, \tilde{\omega})$-stable for some $\tilde{M} \in [1,\infty)$ and $\tilde{\omega} \in \R$ by~\eqref{eq: normen komp} and~\eqref{eq: qkontrgr in Y_t}  (Proposition~3.4 of~\cite{Kato70}), so that the sequence $(U^{\pm}(t_k',t_0,\pi)y)$ is bounded in the norm of $Y$ (recall that
\begin{align*}
e^{A^{\pm}(t) \tau} \big|_Y = e^{ \tilde{A}^{\pm}(t) \tau}
\end{align*}
by Proposition~2.3 of~\cite{Kato70}). Since $Y$ is reflexive (Milman's theorem), every subsequence of $(U^{\pm}(t_k',t_0,\pi)y)$ has in turn a weakly convergent subsequence in $Y$ whose weak limit must be equal to $U^{\pm}(t_{\infty},t_0,\pi)y$ by assertion~(i), and therefore~\eqref{eq: U(t_k',t_0,pi)y schwach konv} follows.
In order to see~\eqref{eq: limsup U(t_k',t_0,pi)y} notice first that $(U^{\pm}(t_k',t_n, \pi)x)_{n \in \N}$ is a Cauchy sequence in $X$ for every $x \in X$ and $k \in \N$, where
\begin{align*}
U^{\pm}(t_k',\tau, \pi) := U^{\pm}(\tau, t_k', \pi)^{-1} 
= e^{-A^{\pm}(t_k)(t_{k+1}-t_k')} \dotsb e^{-A^{\pm}(r_{\pi}(\tau))(\tau-r_{\pi}(\tau))} 
\end{align*} 
for $\tau \in (t_k',t_{\infty})$ 
and where $r_{\pi}(\tau)$ denotes the largest point of $\pi$ less than or equal to $\tau$. Indeed, for every $x \in Y$, 
\begin{align*}
U^{\pm}(t_k',t_m,\pi)x - U^{\pm}(t_k',t_n,\pi)x = -\int_{t_m}^{t_n} U^{\pm}(t_k',\tau,\pi) A^{\pm}(r_{\pi}(\tau))x \, d\tau \longrightarrow 0 \quad (m,n \to \infty) 
\end{align*} 
in $X$ and by the $(M,\omega)$-stability of $A^{\mp}$, this convergence extends to all $x \in X$. We denote the limit by $U^{\pm}(t_k',t_{\infty},\pi)x$ and note for later use that 
\begin{align} \label{eq: schritt 1, schw konv}
U^{\pm}(t_k', t_{\infty},\pi)y \in Y \quad \text{and} \quad U^{\pm}(t_k',t_n,\pi)y \longrightarrow U^{\pm}(t_k', t_{\infty},\pi)y \quad \text{weakly in } Y
\end{align}
by the same arguments as those for~\eqref{eq: U(t_k',t_0,pi)y schwach konv}.
Since $U^{\pm}(t_k',t_0,\pi) = U^{\pm}(t_k',t_n,\pi) U^{\pm}(t_n,t_0,\pi)$ for all $n \in \N$, it follows that 
\begin{align}  \label{eq: schritt 1, flusseigenschaft}
U^{\pm}(t_k',t_0,\pi) = U^{\pm}(t_k',t_{\infty},\pi) U^{\pm}(t_{\infty},t_0,\pi).
\end{align}
Also, since 
\begin{align*}
U^{\pm}(t_k',t_n,\pi) = e^{A^{\mp}( \ol{t}_k )(t_{k+1}-t_k')} \dotsb e^{A^{\mp}( \ol{t}_{n-1} )(t_n-t_{n-1})} \quad (\ol{t}_i := 1 - t_i)
\end{align*}
for $n \ge k+1$, it follows by successively passing from $\norm{\,.\,}_{\ol{t}_{\infty}}^{\mp}$ to $\norm{\,.\,}_{\ol{t}_k}^{\mp}$ to ...~to $\norm{\,.\,}_{\ol{t}_{n-1}}^{\mp}$ and back to $\norm{\,.\,}_{\ol{t}_{\infty}}^{\mp}$ with the help of~\eqref{eq: normen komp}, and by using~\eqref{eq: qkontrgr in Y_t} at each successive step, that 
\begin{align*}
\norm{ U^{\pm}(t_k',t_n,\pi)z }_{\ol{t}_{\infty}}^{\mp} \le e^{2c^{\mp} (t_{\infty}-t_k)} \, e^{\omega_0 (t_n-t_k')} \norm{z}_{\ol{t}_{\infty}}^{\mp}
\end{align*}
for every $z \in Y$, and therefore
\begin{align}  \label{eq: schritt 1, limsup}
\norm{ U^{\pm}(t_k',t_{\infty}, \pi)z }_{\ol{t}_{\infty}}^{\mp} 
\le e^{2c^{\mp} (t_{\infty}-t_k)} \, e^{\omega_0 (t_{\infty}-t_k')} \norm{z}_{\ol{t}_{\infty}}^{\mp}
\end{align}
for $z \in Y$ by virtue of~\eqref{eq: schritt 1, schw konv}. 
Combining now~\eqref{eq: schritt 1, flusseigenschaft} and~\eqref{eq: schritt 1, limsup} we obtain~\eqref{eq: limsup U(t_k',t_0,pi)y}, which concludes our first step. 
\smallskip

As a second step we observe that $U_0^{\pm}(t,s)y := \lim_{n \to \infty} U^{\pm}(t,s,\pi_{y,s,n}^{\pm})y$ for $y \in Y$ and $(s,t) \in \Delta$ defines a 
linear operator from $Y$ to $X$ extendable to a bounded operator $U^{\pm}(t,s)$ in $X$, and that $U^{\pm}$ is an evolution system in $X$ such that $t \mapsto U^{\pm}(t,s)y$ for every $y \in Y$ is right differentiable (in the norm of $X$) at $s$ with right derivative $A^{\pm}(s)y$. 
All this follows in the same way as in~\cite{Kobayasi79} (Lemma~4 and~5). In particular, it follows from the right differentiability and evolution system properties just mentioned that $[0,t] \ni s \mapsto U^{\pm}(t,s)y$ is continuously differentiable (from both sides) for every $y \in Y$ with derivative $s \mapsto -U^{\pm}(t,s)A^{\pm}(s)y$ by Corollary~2.1.2 of~\cite{Pazy83}.
\smallskip

As a third step we show that $U^{\pm}(t,s)$ leaves the subspace $Y$ invariant for every $(s,t) \in \Delta$ and that $[s,1] \ni t \mapsto U^{\pm}(t,s)y$ is right continuous in $Y$ for every $y \in Y$. 
In order to see that $U^{\pm}(t,s)y$ lies in $Y$ for $y \in Y$, notice that the sequence $(U^{\pm}(t,s,\pi_{y,s,n}^{\pm})y)$ is bounded in the norm of $Y$, 
whence by the same argument as for~\eqref{eq: U(t_k',t_0,pi)y schwach konv} 
\begin{align} \label{eq: schritt 3, schw konv}
U^{\pm}(t,s)y \in Y \quad 
\text{and} \quad U^{\pm}(t,s,\pi_{y,s,n}^{\pm})y \longrightarrow U^{\pm}(t,s)y 
\quad \text{weakly in } Y.
\end{align} 
In order to see that $[s,1] \ni t \mapsto U^{\pm}(t,s)y$ is right continuous in $Y$ for every $y \in Y$, we have only to show, by the invariance property just established, that $U^{\pm}(t+h,t)y \longrightarrow y$ in $Y$ as $h \searrow 0$ for every $t \in [0,1)$. And for this in turn it is sufficient to show, by the uniform convexity of $Y_t$, that 
\begin{gather}
U^{\pm}(t+h,t)y \longrightarrow y \quad \text{weakly in } Y \text{ as } h \searrow 0 \\
\text{and} \notag \\
\limsup_{h \searrow 0} \norm{ U^{\pm}(t+h,t)y }_t^{\pm} \le \norm{y}_t^{\pm}
\end{gather}
Since this can be achieved in a way similar to the proof of~\eqref{eq: U(t_k',t_0,pi)y schwach konv} and~\eqref{eq: limsup U(t_k',t_0,pi)y}, 
we may omit the details.
\smallskip

We can now 
show that $t \mapsto U^+(t,s)y$ is continuous in $Y$ for every $y \in Y$ and then conclude the proof. 
Indeed, $\tau \mapsto U^{\mp}(1-s,1-\tau)z$ is differentiable for $z \in Y$ with derivative $\tau \mapsto U^{\mp}(1-s,1-\tau)A^{\mp}(1-\tau)z$ by the last remark of our second step 
and $\tau \mapsto U^{\pm}(\tau,s)y$ is right differentiable for $y \in Y$ with right derivative $\tau \mapsto A^{\pm}(\tau)U^{\pm}(\tau,s)y$ because for every $\tau \in [s,1)$ the vector $z :=  U^{\pm}(\tau,s)y$ lies in $Y$ and
\begin{align*}
\frac 1 h \big( U^{\pm}(\tau+h,s)y - U^{\pm}(\tau,s)y \big) = \frac 1 h \big( U^{\pm}(\tau+h,\tau)z - z \big) \longrightarrow A^{\pm}(\tau)z \quad (h \searrow 0) 
\end{align*}
by our second and third step. So, the map $[s,t] \ni \tau \mapsto U^{\mp}(1-s,1-\tau)U^{\pm}(\tau,s)y$ is right differentiable for every $y \in Y$ with right derivative $0$. Corollary~2.1.2 of~\cite{Pazy83} therefore yields
\begin{align*}
U^{\mp}(1-s,1-t)U^{\pm}(t,s)y - y = U^{\mp}(1-s,1-\tau)U^{\pm}(\tau,s)y \big|_{\tau=s}^{\tau=t} = 0
\end{align*}
for every $y \in Y$ and hence
\begin{align*}
U^{\mp}(1-s,1-t)U^{\pm}(t,s) = 1 = U^{\mp}(1-\ol{t},1-\ol{s})U^{\pm}(\ol{s},\ol{t}) = U^{\mp}(t,s)U^{\pm}(1-s,1-t)
\end{align*} 
for all $(s,t) \in \Delta$. It follows that 
\begin{align*}
U^+(t-h,s)y = U^+(t,t-h)^{-1} U^+(t,s)y = U^-(1-t+h,1-t) U^+(t,s)y \longrightarrow U^+(t,s)y 
\end{align*}
in $Y$ as $h \searrow 0$ by our third step, whence $t \mapsto U^+(t,s)y$ right \emph{and} left continuous and hence continuous in $Y$. 
Combining this with the previous steps, we see with the help of Corollary~2.1.2 of~\cite{Pazy83} that $t \mapsto U^+(t,s)y$ is continuously differentiable in $X$ for every $y \in Y$ with derivative $t \mapsto A^+(t)U^+(t,s)y$ and therefore $U := U^+$ is an evolution system for $A = A^+$ on $Y$, as desired.
\end{proof}

Incidentally, it is also possible to improve (a version of) the well-posedness theorem from~\cite{Kato73} (Theorem~1) in the spirit of~\cite{Kobayasi79}: in this theorem strong continuity of $t \mapsto A(t)|_Y$ is sufficient as well, provided that $A$ is $(M,\omega)$-stable (instead of only quasistable) and that $t \mapsto \norm{B(t)}$ is bounded (instead of only upper integrable). 
(We make this proviso in order to make sure that the boundedness condition~(2.1) of~\cite{Kobayasi79} is still satisfied for arbitrary 
partitions $\pi$ and that~(2.2) of~\cite{Kobayasi79} is satisfied with the modified right hand side $C \norm{x} \int_{t_i}^{t_k'} \alpha(\tau) \, d\tau$, where $\alpha$ is a suitable integrable function. All other arguments from~\cite{Kobayasi79} carry over without formal change, a bit more care being necessary in the justification of assertion~(c) of~\cite{Kobayasi79} because of the weaker regularity of $t \mapsto S(t)$ -- see~\cite{Dorroh75}.)

\subsection{Some remarks}

We close this section about abstract well-posedness results with some remarks concerning, in particular, the relation of the results from Section~2.1 and~2.2 with the results from~\cite{Kato70}, \cite{Kato73}, \cite{Kobayasi79}, \cite{NickelSchnaubelt98} and the result from Section~2.3. 
\smallskip

1. Compared to the well-posedness theorems from~\cite{Kato70}, \cite{Kato73}, \cite{Kobayasi79} where no commutator conditions of the kind~\eqref{eq: 1-facher kommutator skalar} or~\eqref{eq: p-facher kommutator skalar} are imposed, the well-posedness theorems from Section~2.1 and~2.2 are furnished with rather mild stability and regularity conditions: 
%
%
Concerning stability, we had only to require in the theorems from Section~2.1 and~2.2 that the family
$A$ be $(M,\omega)$-stable in $X$ (or that the slightly weaker stability condition~\eqref{eq: stab nickelschnaubelt} be satisfied). In the well-posedness theorems from~\cite{Kato70}, 
\cite{Kato73}, \cite{Kobayasi79}, by contrast, it has to be required in addition that there exist an $A(t)$-admissible subspace $Y$ of $X$ contained in all the domains of the $A(t)$ such that the induced family $\tilde{A}$ consisting of 
the $Y$-parts $\tilde{A}(t)$ of the $A(t)$ 
is $(\tilde{M},\tilde{\omega})$-stable in $Y$. Such a subspace $Y$ is generally difficult to find 
-- unless the domains of the $A(t)$ are time-independent. (In this latter case, one can choose $Y := D(A(0)) = D(A(t))$ endowed with the graph norm of $A(0)$, provided only that $t \mapsto A(t)$ 
is of bounded variation -- just apply 
Proposition~4.4 of~\cite{Kato70} with $S(t) := A(t)-(\omega+1)$.)
%
%
Concerning regularity, we had only to require strong continuity conditions in the theorems from Section~2.1 and~2.2: namely, we had to require that 
\begin{align*}
t \mapsto C^{(0)}(t)y = A(t)y \quad \text{and} \quad  (t_1,\dots,t_{k+1}) \mapsto C^{(k)}(t_1, \dots, t_{k+1})y 
\end{align*}
be continuous for $k \in \{1, \dots, p\}$ and $y$ in a dense subspace $Y$ of $X$ contained in all the respective domains or, equivalently, that the maximal continuity subspaces~\eqref{eq: max stetur, p=1} or~\eqref{eq: max stetur, allg p} be dense in $X$ and that $\mu$ be continuous. 
In general situations without commutator conditions of the kind~\eqref{eq: 1-facher kommutator skalar} or~\eqref{eq: p-facher kommutator skalar}, by contrast, strong continuity conditions are not sufficient for well-posedness -- not even if the domains of the $A(t)$ are time-independent. (See the respective counterexamples in~\cite{Phillips53} (Example~6.4), \cite{EngelNagel00} (Example~VI.9.21), \cite{SchmidGriesemer15} (Example~1 and~2).)
Accordingly, in the general well-posedness results from~\cite{Kato70} (Theorem~6.1), \cite{Kobayasi79}, and~\cite{Kato73} (Theorem~1) for general semigroup generators $A(t)$, there is a strong $W^{1,1}$-regularity condition on certain auxiliary operators $S(t)$ defined on an $A(t)$-admissible subspace $Y$ of $X$ contained in all the domains $D(A(t))$, 
which boils down 
to a strong $W^{1,1}$-regularity condition on $t \mapsto A(t)$ in the case of time-independent domains $D(A(t)) = Y$ (Remark~6.2 of~\cite{Kato70}); 
and in the special well-posedness result (Theorem~5.2 and Remark~5.3) from~\cite{Kato70} for group generators $A(t)$, there still is a norm continuity condition on $t \mapsto A(t)|_Y$ and a regularity condition on certain auxiliary norms $\norm{\,.\,}_t^{\pm}$ on $Y$, which boils down to a Lipschitz continuity condition on $t \mapsto A(t)$ in the case of time-independent domains $D(A(t)) = Y$ (Theorem~2.1 of~\cite{SchmidGriesemer15}).
\smallskip

2. In a certain special case involving group generators $A(t)$ with time-independent domains, the well-posedness assertion of the theorems from Section~2.1 and~2.2 can alternatively also be inferred from 
the well-posedness theorem from Section~2.3. In fact, if in addition to the assumptions of Theorem~2.3 the following three conditions are satisfied, then the well-posedness assertion of this theorem (but no representation formula, of course) also follows from Theorem~\ref{thm: wohlg, grerz}:
\begin{itemize}
\item $A(t)$ for every $t \in I$ is a quasicontraction group generator with time-independent domain $D(A(t)) = Y$ in the uniformly convex space $X$ such that 
\begin{align} \label{eq: zshg mit spez wohlgsatz, 0}
\norm{e^{\pm A(t) \tau} } \le e^{\omega \tau} \quad (\tau \in [0,\infty))
\end{align}
for some $t$-independent growth exponent $\omega \in \R$,
\item $C^{(k)}(t_1, \dots, t_{k+1})$ is a bounded operator on $X$ for every $(t_1, \dots, t_{k+1}) \in I^{k+1}$ and 
\begin{align} \label{eq: zshg mit spez wohlgsatz, 1}
\sup_{ (t_1, \dots, t_{k+1}) \in I^{k+1} } \norm{ C^{(k)}(t_1, \dots, t_{k+1}) } < \infty
\end{align}
for every $k \in \{1, \dots, p-1\}$ (an empty condition for $p=1$!),
\item $t \mapsto A(t)y$ is continuous for every $y \in Y$. 
\end{itemize}
Indeed, under these conditions the norms $\norm{\,.\,}_t^{\pm}$ appearing in Theorem~\ref{thm: wohlg, grerz} can be chosen to be $\norm{\,.\,}_* := \norm{(A(0)-\omega-1)\,.\,}$ for every $t \in I$ ($t$-independent!): with this norm, $Y$~becomes a uniformly convex subspace admissible for the group generators $\pm A(t)$ and
\begin{align} \label{eq: zshg mit spez wohlgsatz, 2}
\norm{e^{\pm A(t)\tau}y}_* \le e^{\omega_0 \tau} \norm{y}_* \quad (y \in Y \text{ and } \tau \in [0,\infty))
\end{align}
for a suitable $\omega_0 \in \R$, and finally $Y^{\circ} = Y$. 
(In order to see~\eqref{eq: zshg mit spez wohlgsatz, 2} and the $\pm A(t)$-admissibility of $Y$ one checks that 
\eqref{eq: vertrel, allg p} holds true for $\tau \in (-\infty,0)$ as well, so that in particular
\begin{align*}
A(0) e^{\pm A(t)\tau}y = e^{\pm A(t)\tau} \big( A(0) &+ C^{(1)}(0,t) (\pm \tau) + \dotsb + C^{(p-1)}(0,t, \dots, t) (\pm \tau)^{p-1}/(p-1)! \\
&+ \mu(0,t, \dots, t) (\pm \tau)^p / p! \big)y
\end{align*}
for all $y \in Y$ and $\tau \in [0,\infty)$. With the help of~\eqref{eq: zshg mit spez wohlgsatz, 0} and \eqref{eq: zshg mit spez wohlgsatz, 1} the desired $\pm A(t)$-admissibility 
and the quasicontraction group property~\eqref{eq: zshg mit spez wohlgsatz, 2} then readily follow.)
\smallskip

3. In the well-posedness theorems from~\cite{Ishii82} and~\cite{NeidhardtZagrebnov09} weaker notions of well-posedness are used than here~\cite{NeidhardtZagrebnov14}, which in return allows for weaker regularity assumptions than those of~\cite{Kato73} and~\cite{Kobayasi79} (but 
the stability conditions are the same). 
In the second product representation theorem from~\cite{Nickel00} (Proposition~4.9) which also asserts well-posedness, there seems to be missing, in the hyperbolic case, an additional stability and regularity assumption of the kind of condition~(ii'') from~\cite{Kato70}. At least, it is not clear~\cite{NagelNickelSchnaubelt14} how the asserted well-posedness should be established and how the range condition from Chernoff's theorem (invoked in~\cite{Nickel00}) should be verified without such an additional assumption. (In this respect, see in particular Theorem~4.19 of~\cite{Nickel96} and the remarks preceding it, which state that $\mathcal{Y}$ is a core for $\mathcal{G}$ only under the additional condition (ii'') from~\cite{Kato70}.) 
As far as~\cite{Constantin01} is concerned, it should be remarked that the abstract well-posedness theorem of this paper is actually a corollary of the well-posedness theorem of~\cite{Kato73}. (Indeed, if for every $y \in Y$ the map $t \mapsto S(t)y$ is differentiable at all except countably many points with an exceptional set $N$ not depending on $y$ and if $\sup_{t \in I \setminus N} \norm{S'(t)y} < \infty$, then $t \mapsto S(t)y$ is already absolutely continuous (Theorem~6.3.11 of~\cite{Cohn13}) and
\begin{align*}
S(t)y = S(0)y + \int_0^t S'(\tau)y \, d\tau
\end{align*}
(Proposition~1.2.3 of~\cite{ArendtBatty12}) for every $y \in Y$, so that the strong $W^{1,1}$-regularity condition for $t \mapsto S(t)$ from~\cite{Kato73} is satisfied.) 
\smallskip

4. It is clear from the proofs of Theorem~\ref{thm: wohlgestelltheit, p=1} and Theorem~\ref{thm: wohlgestelltheit, allg p} that the well-posedness statements remain valid if the $(M,\omega)$-stability condition of these theorems is replaced by the condition from~\cite{NickelSchnaubelt98} that there exist a sequence $(\pi_n)$ of partitions of $I$ such that $\operatorname{mesh}(\pi_n) \longrightarrow 0$ and
\begin{align} \label{eq: stab nickelschnaubelt}
\norm{ e^{A(r_n(t)) (t-r_n(t))} 
\dotsb e^{A(r_n(s)) (r_n^+(s)-s)} } \le M e^{\omega (t-s)} \quad ((s,t) \in \Delta).
\end{align} 
In~\cite{NickelSchnaubelt98} this stability condition is shown to be strictly weaker than $(M,\omega)$-stability.
%
Also, it is clear from the proof of Theorem~\ref{thm: wohlgestelltheit, p=1} that the representation formula for the evolution is still valid if~\eqref{eq: stab nickelschnaubelt} is sharpened to
\begin{align} \label{eq: stab nickelschnaubelt, strikt}
\norm{ e^{A(r_n(t)) r (t-r_n(t))} \dotsb e^{A(r_n(s)) r (r_n^+(s)-s)} } \le M e^{\omega r (t-s)} \quad ((s,t) \in \Delta, r \in [0,\infty)).
\end{align} 
In particular, the method of proof of Theorem~\ref{thm: wohlgestelltheit, p=1} yields an alternative and more elementary proof (without reference to the Trotter--Kato theorem) of Proposition~2.5 from~\cite{NickelSchnaubelt98} (or, rather, of a slightly corrected version of it: in order for the proof of~\cite{NickelSchnaubelt98} to work one has to choose as the domain of $\int_s^t A(\tau) \,d\tau$ the maximal continuity subspace $Y^{\circ}$ of $A$ as defined in~\eqref{eq: max stetur, p=1}, instead of the quite arbitrary subspace denoted by $Y$ in~\cite{NickelSchnaubelt98} because such a subspace, in contrast to~$Y^{\circ}$, is not left invariant by $(\ol{B}_n-\lambda)^{-1}$ in general). 
\smallskip

5. In the situation of Theorem~2.1, 
one might think that it should be possible to (more efficiently) obtain the well-posedness of the initial value problems~\eqref{eq: ivp} on $Y^{\circ}$ by first defining a candidate $U$ for the sought evolution system through the representation formula
\begin{align*}
U(t,s) := e^{ \ol{ ( \int_s^t A(\tau) \, d\tau )^{\circ} } } e^{1/2 \int_s^t \int_s^{\tau} \mu(\tau, \sigma) \, d\sigma \, d\tau }, 
\end{align*}
and by then verifying that this candidate is indeed an evolution system for $A$ on $Y^{\circ}$. 
In order to prove that the closure of $(\int_s^t A(\tau)\, d\tau)^{\circ}$ exists and is a semigroup generator, 
one might want to employ the theorem of Trotter and Kato as in~\cite{NickelSchnaubelt98} -- instead of exploiting the locally uniform convergence of the sequences $(U_n^r(t,s)x)$ as we did. 
And in order to verify the evolution system properties for 
$U$, one might want to make rigorous the following formal differentiation rule for exponential operators (appearing in~\cite{Wilcox67}, for instance): 
\begin{align} \label{eq: duhamel}
\frac{e^{B(t+h)} - e^{B(t)}}{h} &= \frac{e^{B(t+h) \tau} e^{B(t)(1-\tau)}}{h} \Big|_{\tau=0}^{\tau=1}  
= \int_0^1 e^{B(t+h) \tau} \frac{B(t+h)-B(t)}{h} \, e^{B(t)(1-\tau)} \, d\tau \notag \\
&\longrightarrow \int_0^1 e^{B(t+h) \tau} B'(t) \, e^{B(t)(1-\tau)} \, d\tau \quad (h \to 0)
\end{align} 
with $B(t) := \ol{ ( \int_s^t A(\tau) \, d\tau )^{\circ} }$. 
Yet, this is possible only if $\Re \mu(\tau,\sigma) \ge 0$ for all $\sigma \le \tau$ because only then can the right hand side of~\eqref{eq: thm 2.1 zwbeh (ii), 1} be dominated by a bound $M' e^{\omega' r}$ for all $r \in [0,\infty)$ uniformly in $n \in \N$ (a first crucial assumption of the Trotter--Kato theorem). And moreover, the verification of the density of $\ran{ ( ( \int_s^t A(\tau) \, d\tau )^{\circ} - \lambda) }$ in $X$ for $\lambda > \omega'$ (a second crucial assumption of the Trotter--Kato theorem) and the verifications of the evolution system properties for $U$ with the help of~\eqref{eq: duhamel} are more involved than the arguments in our approach.
\smallskip

6. In 
Proposition~2.4 we obtained well-posedness on the given subspace $Y$ only in the wide sense, that is, the existence of a unique evolution system $U$ for $A$ on $Y$ in the wide sense. We could not prove the invariance of $Y$ under the operators $U(t,s)$, however (while in the special case $p=1$ we could prove such an invariance for a different subspace, namely~\eqref{eq: max stetur, p=1}, in Corollary~2.2). 
And, in fact, we do not expect it to be true in general: 
at least, it is 
not possible to obtain this invariance -- as in Theorem~2.3 -- 
by a closedness argument from~\eqref{eq: vorbeiziehrel 3, allg p} (which equation 
is still true in the situation of Proposition~2.4 for vectors $y \in Y$) 
because in general, under the assumptions of Proposition~2.4, none of the operators $C^{(k)}(t_1, \dots, t_{k+1})|_Y$ will be a closed operator in $X$.
Choose, for instance, 
\begin{align*}
A(t) := A_0 + B(t) = A_0 + b(t) B_0 \text{ \, in } X := L^1(I), 
\end{align*}
where $A_0 f := \partial_x f$ for  $f \in D(A_0) = \{ f \in W^{1,1}(I): f(1) = 0\}$ 
and $(B_0 f)(x) := x^p f(x)$ for $f \in X$ and where $t \mapsto b(t) \in \C$ is continuous, and then choose
\begin{align*}
Y := D(A_0^2) \text{\, if } p = 1 \quad \text{and} \quad Y := D(A_0^p) \text{\, if } p \in \N \setminus \{1\}
\end{align*}
endowed with the norm of $W^{2,1}(I)$ or $W^{p,1}(I)$, respectively. It is then easy to see that, indeed, all the assumptions of Proposition~2.4 are satisfied, but $C^{(k)}(t_1, \dots, t_{k+1})|_Y$ is non-closed for every $k \in \{0, \dots, p\}$ and every $(t_1, \dots, t_{k+1}) \in I^{k+1}$. 
(We point out that in this specific example one nevertheless does have the invariance of $Y$ under the operators $U(t,s)$ for $s \le t$, but this seems to essentially depend on 
the specific structure of the example: since $A(t)$ is $A_0$ plus a bounded perturbation $B(t)$, the evolution system $U$ for $A$ on $Y$ in the wide sense satisfies
\begin{align*}
U(t,s)f = e^{A_0(t-s)}f + \int_s^t e^{A_0(t-\tau)} B(\tau) U(\tau,s)f \, d\tau \quad (f \in X)
\end{align*}
and hence is given by the 
respective perturbation series expansion; and since $e^{A_0\,.\,}|_Y$ is a strongly continuous semigroup in $Y$ and $B(t)|_Y$ is a bounded operator in $Y$, 
this perturbation series leaves $Y$ invariant.) 
%

\section{Some applications of the abstract results}

We now discuss some applications of the abstract results from Section~2. In all of them the operators $A(t)$ will be skew self-adjoint in a Hilbert space $X$.

\subsection{Segal field operators}

In this subsection we apply the well-posedness result of Section~2.1 to Segal field operators $\Phi(f_t)$ in $\mathcal{F}_+(\mathfrak{h})$, the symmetric Fock space over a complex Hilbert space $\mathfrak{h}$. 
Segal field operators $\Phi(f)$ are defined for $f \in \mathfrak{h}$ as the closure of $2^{-1/2}(a(f) + a^*(f))$, where $a(f)$ and $a^*(f)$ are the usual annihilation and creation operators in $\mathcal{F}_+(\mathfrak{h})$ corresponding to $f$. It is well-known that the operators $\Phi(f)$ are self-adjoint and, as a consequence of the canonical commutation relations for creation and annihilation operators, they satisfy the commutation relations 
\begin{align} \label{eq: ccr, 0}
[\Phi(f), \Phi(g)] = i \Im \scprd{f,g} \quad (f,g \in \mathfrak{h})
\end{align}
on a suitable dense subspace of $\mathcal{F}_+(\mathfrak{h})$. 
See~\cite{BratteliRobinson} (Section~5.2.1), \cite{ReedSimon} (Section~X.7) or~\cite{Dimock11} (Section~5.4) for these and other standard facts about such operators and basic concepts from quantum field theory. 
In view of~\eqref{eq: ccr, 0} we expect to obtain well-posedness for the operators $A(t) = i \Phi(f_t)$ by means of Theorem~\ref{thm: wohlgestelltheit, p=1}. Indeed, we have:


\begin{cor} \label{cor: wohlg segal}
Set $A(t) = i \Phi(f_t)$ in $X := \mathcal{F}_+(\mathfrak{h})$ and suppose that $t \mapsto f_t \in \mathfrak{h}$ is continuous. Then there exists a unique evolution system $U$ for $A$ on the maximal continuity subspace $Y^{\circ}$ for $A$ 
and it is given by
\begin{align*}
U(t,s) = e^{ \ol{ (\int_s^t i \Phi(f_{\tau}) \, d\tau)^{\circ} } } e^{-i/2 \int_s^t \int_s^{\tau} \Im \scprd{f_{\tau}, f_{\sigma}} \, d\sigma \, d\tau}
= W\Big(\int_s^t f_{\tau} \, d\tau \Big) e^{-i/2 \int_s^t \int_s^{\tau} \Im \scprd{f_{\tau}, f_{\sigma}} \, d\sigma \, d\tau} 
\end{align*}
where $W(h) := e^{i \Phi(h)}$ denotes the Weyl operator for $h \in \mathfrak{h}$. 
\end{cor}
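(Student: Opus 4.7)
The plan is to apply Theorem~\ref{thm: wohlgestelltheit, p=1} to $A(t) := i\Phi(f_t)$ and then identify the closed operator $\ol{(\int_s^t i\Phi(f_\tau)\,d\tau)^{\circ}}$ with $i\Phi(\int_s^t f_\tau\,d\tau)$ in order to rewrite the resulting representation formula in Weyl form.

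First, since each $\Phi(f_t)$ is self-adjoint, Stone's theorem shows that $A(t)$ generates the unitary group $e^{A(t)\tau} = W(\tau f_t)$, so $A$ is $(1,0)$-stable. Second, the Weyl form $W(f)W(g) = e^{-i\Im\scprd{f,g}/2}\,W(f+g)$ of the CCR immediately yields
\begin{equation*}
e^{A(s)\sigma}\,e^{A(t)\tau} = e^{-i\sigma\tau\,\Im\scprd{f_s,f_t}}\,e^{A(t)\tau}\,e^{A(s)\sigma} \qquad (\sigma,\tau \in [0,\infty)),
\end{equation*}
and differentiating this in $\sigma$ at $\sigma = 0$ on vectors $y \in D(A(s))$ produces the commutation relation~\eqref{eq: vertrel, p=1} with the complex scalar $\mu(s,t) := -i\,\Im\scprd{f_s,f_t}$, which is continuous by continuity of $t \mapsto f_t$. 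Third, the algebraic (finite-particle) Fock space $\mathcal{F}_0$ is dense in $X$, contained in every $D(\Phi(f))$, and on each $n$-particle sector satisfies a bound $\norm{\Phi(f)y} \le c_n \norm{f}\norm{y}$; hence for each $y \in \mathcal{F}_0$ the map $f \mapsto \Phi(f)y$ is Lipschitz and $t \mapsto A(t)y$ is continuous, so $\mathcal{F}_0 \subset Y^{\circ}$ and $Y^{\circ}$ is dense. Theorem~\ref{thm: wohlgestelltheit, p=1} therefore applies and delivers both the existence and uniqueness of $U$ on $Y^{\circ}$ and the first of the two formulas.

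For the second formula, set $h := \int_s^t f_\tau\,d\tau \in \mathfrak{h}$. On $\mathcal{F}_0 \subset Y^{\circ}$, the $\R$-linearity of $\Phi$ together with the Lipschitz estimate above and a Riemann-sum approximation of $h$ give $\int_s^t \Phi(f_\tau)y\,d\tau = \Phi(h)y$, so the operator $(\int_s^t i\Phi(f_\tau)\,d\tau)^{\circ}$ coincides with $i\Phi(h)$ on $\mathcal{F}_0$. Since $\Phi(h)$ is essentially self-adjoint on $\mathcal{F}_0$, the closure of $i\Phi(h)|_{\mathcal{F}_0}$ is $i\Phi(h)$ itself, and hence $i\Phi(h) \subset \ol{(\int_s^t i\Phi(f_\tau)\,d\tau)^{\circ}}$. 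Both of these operators generate $C_0$-semigroups (the latter by Theorem~\ref{thm: wohlgestelltheit, p=1}), and a $C_0$-semigroup generator admits no proper extension that is itself a $C_0$-semigroup generator (a standard resolvent argument), so equality must hold; taking exponentials then gives $e^{\ol{(\int_s^t i\Phi(f_\tau)\,d\tau)^{\circ}}} = e^{i\Phi(h)} = W(h)$, which is the second formula.

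The main technical point is the final identification of the closure with $i\Phi(h)$: everything else is routine once the Weyl commutation relation supplies the scalar $\mu$ and the finite-particle subspace $\mathcal{F}_0$ supplies a dense, invariant core on which $\Phi$ is manifestly $\R$-linear. The essential self-adjointness of $\Phi(h)$ on $\mathcal{F}_0$ and the maximality of $C_0$-semigroup generators are what make this identification go through.
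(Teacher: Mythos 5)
Your proof is correct. For the well-posedness statement and the first representation formula you follow essentially the same route as the paper: you verify the hypotheses of Theorem~\ref{thm: wohlgestelltheit, p=1}, obtaining the commutation relation~\eqref{eq: vertrel, p=1} with $\mu(s,t) = -i\,\Im\scprd{f_s,f_t}$ from the Weyl form of the CCR (the paper differentiates $\Phi(f)e^{i\Phi(g)} = e^{i\Phi(g)}(\Phi(f)-\Im\scprd{f,g})$ rather than the product relation for Weyl operators, but these are interchangeable) and the density of $Y^{\circ}$ from a relative bound on $\Phi(f)$ --- the paper exhibits $D(N^{1/2})\subset Y^{\circ}$ via $\norm{\Phi(f)\psi}\le 2^{1/2}\norm{f}\,\|(N+1)^{1/2}\psi\|$, while you use the finite-particle subspace with the sector-wise bounds; both yield dense subspaces of $Y^{\circ}$. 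Where you genuinely diverge is the second (Weyl-operator) formula: the paper obtains it by repeatedly applying $W(f)W(g)=W(f+g)e^{-i\Im\scprd{f,g}/2}$ to the product approximants $U_n$ from the proof of Theorem~\ref{thm: wohlgestelltheit, p=1} and passing to the limit via the strong continuity of $h\mapsto W(h)$, whereas you identify the abstract generator $\ol{(\int_s^t i\Phi(f_\tau)\,d\tau)^{\circ}}$ directly with $i\Phi\bigl(\int_s^t f_\tau\,d\tau\bigr)$, using the $\R$-linearity and continuity of $f\mapsto\Phi(f)y$ on the finite-particle space, the essential self-adjointness of $\Phi(h)$ there, and the fact that a $C_0$-semigroup generator admits no proper generator extension. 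Both arguments are sound; yours has the small advantage of exhibiting the closure of $(\int_s^t i\Phi(f_\tau)\,d\tau)^{\circ}$ concretely as a Segal field operator rather than only identifying its exponential, while the paper's limit argument stays entirely within the approximation scheme already set up in Theorem~\ref{thm: wohlgestelltheit, p=1} and needs no essential self-adjointness input beyond the self-adjointness of the $\Phi(f)$ themselves.
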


\begin{proof}
We have already remarked that the operators $A(t)$ are skew self-adjoint and hence (semi)group generators. We also see, by the Weyl form 
\begin{align} \label{eq: ccr}
\Phi(f) e^{i \Phi(g)} = e^{i \Phi(g)} \big( \Phi(f) - \Im \scprd{f,g} \big) 
\end{align}
of the canonical commutation relations (Proposition~5.2.4~(1) in~\cite{BratteliRobinson}), that the generators $A(s)$ can be commuted through the groups $e^{A(t)\,.\,}$ 
in the way required in~\eqref{eq: vertrel, p=1} with $\mu(s,t) = -i \Im \scprd{f_s,f_t}$. 
It remains to show that the maximal continuity subspace $Y^{\circ}$ for $A$ is a dense subspace of $X$. In order to do so, one uses that for every $f \in \mathfrak{h}$ one has: $D(N^{1/2}) \subset D(\Phi(f))$ 
and 
\begin{align} \label{eq: Phi rel beschr bzgl N^1/2}
\norm{\Phi(f)\psi} = \big\| 2^{-1/2}(a(f) + a^*(f)) \psi \big\|  \le 2^{1/2} \norm{f} \big\| (N+1)^{1/2} \psi \big\|
\end{align}
for every $\psi \in D(N^{1/2})$ (Lemma~5.3 of~\cite{Dimock11}), where $N$ is the number operator in $\mathcal{F}_+(\mathfrak{h})$. Since $t \mapsto f_t$ is continuous by assumption, the estimate~\eqref{eq: Phi rel beschr bzgl N^1/2} shows that the maximal continuity subspace $Y^{\circ}$ for $A$ contains the dense subspace $D(N^{1/2})$ of $X$ and is therefore dense itself. So, the desired well-posedness statement and the first of the 
asserted representation formulas for $U$ follow from Theorem~\ref{thm: wohlgestelltheit, p=1}. 
In order to see the second representation formula for $U$, repeatedly apply the identity 
\begin{align}
W(f)W(g) = W(f+g)e^{-i/2 \Im \scprd{f,g}}
\end{align}
(Proposition~5.2.4~(2) of~\cite{BratteliRobinson}) to the approximants $U_n$ for $U$ from the proof of Theorem~\ref{thm: wohlgestelltheit, p=1} and use the strong continuity of $\mathfrak{h} \ni h \mapsto W(h)$ (Proposition~5.2.4~(4) of~\cite{BratteliRobinson}). 
Alternatively, the well-posedness statement and the first representation formula could also be concluded from Corollary~\ref{cor: wohlgestelltheit, p=1} with $Y := D(N)$ endowed with the graph norm of $N$. 
Indeed, $Y$ with this norm is an $A(t)$-admissible subspace of $X$ because 
\begin{align*}
N e^{i \Phi(f)} = e^{i \Phi(f)}\big( N + \Phi(if) + \norm{f}^2 /2 \big)
\end{align*}
for all $f \in \mathfrak{h}$ (Proposition~2.2 of~\cite{Merkli06}), $Y \subset \cap_{\tau \in I} D(A(\tau))$ and $A(t)Y \subset D(N^{1/2}) \subset \cap_{\tau \in I} D(A(\tau))$
by the definition of creation and annihilation operators, $A(t)|_Y$ is a bounded operator from $Y$ to $X$ by~\eqref{eq: Phi rel beschr bzgl N^1/2}, and finally $[A(s), A(t)]|_{D(N)} \subset -i \Im \scprd{f_s,f_t}$ (Proposition~5.2.3~(3) of~\cite{BratteliRobinson}).
\end{proof}

It is possible to give at least two 
alternative proofs of variants of the above result and we briefly comment on these alternative approaches (which, however, are not necessary for understanding Corollary~3.2 below). 
A first alternative approach is based upon the fifth remark from Section~2.4, which is applicable here because $\Re \mu(\tau,\sigma) = 0$ for all $\sigma, \tau \in I$. It yields the following version of Corollary~3.1: if $t \mapsto f_t \in \mathfrak{h}$ is continuous, then there exists a unique evolution system $U$ for $A$ on the maximal continuity subspace $Y^{\circ}$ for $A$ and $U$ is given by the first representation formula of the corollary. 
A second alternative -- and more pedestrian -- approach is based upon a well-known exponential series expansion for Weyl operators, namely~\eqref{eq: weylop reihendarst} below, and yields the following version of Corollary~3.1 for $\mathfrak{h} = L^2(\R^3)$: if both  
\begin{align*}
t \mapsto f_t \in \mathfrak{h} = L^2(\R^3) \quad \text{and} \quad t \mapsto f_t/\sqrt{\omega} \in \mathfrak{h} = L^2(\R^3)
\end{align*}
are continuous for a measurable function $\omega: \R^3 \to \R$ with $\omega(k) > 0$ for almost all $k \in \R^3$, then there exists a unique evolution system $U$ in the wide sense for $A$ on $Y = D(H_{\omega}^{1/2})$ (where $H_{\omega}$ is the second quantization of $\omega$ defined in~\eqref{eq: def H_omega, 1} and~\eqref{eq: def H_omega, 2} below) and $U$ is given by the second representation formula from the corollary above.
In order to see this by pedestrian arguments, one defines a candidate for the sought evolution $U$ in the wide sense through
\begin{align}  \label{eq: def U, pedestrian}
U(t,s) := W\Big(\int_s^t f_{\tau} \, d\tau \Big) \,\, e^{-i/2 \int_s^t \int_s^{\tau} \Im \scprd{f_{\tau}, f_{\sigma}} \, d\sigma \, d\tau}
\end{align}
and exploits the exponential series expansion 
\begin{align} \label{eq: weylop reihendarst} 
W(g) \psi = e^{i \Phi(g)} \psi = \sum_{n=0}^{\infty} \frac{i^n}{n!} \, \Phi(g)^n \psi \qquad (g \in \mathfrak{h})
\end{align}
for Weyl operators $W(g)$ on vectors $\psi$ in the finite particle subspace $\mathcal{F}_{+}^0(\mathfrak{h}) := \{ \psi \in \mathcal{F}_+(\mathfrak{h}): \psi^{(n)} = 0 \text{ for all but finitely many } n \}$. (See the proof of Theorem~X.41 of~\cite{ReedSimon}.)
With this expansion, one can show by term-wise differentiation and repeated application of the commutation relation $[\Phi(f), \Phi(g)]|_{\mathcal{F}_{+}^0(\mathfrak{h})} \subset i \Im \scprd{f,g}$ (Theorem~X.41~(c) of~\cite{ReedSimon}) 
that the mapping $t \mapsto U(t,s)\psi$
is differentiable for $\psi \in \mathcal{F}_{+}^0(\mathfrak{h})$ with the desired derivative $t \mapsto i \Phi(f_t) U(t,s)\psi$. 
Since $\mathcal{F}_{+}^0(\mathfrak{h})$ is a core for $\Phi(f_t)|_Y$ uniformly in $t \in I$ by virtue of~\eqref{eq: absch Phi durch H_w^{1/2}} below 
(recall, $Y = {D(H_{\omega}^{1/2})}$) and since the operators $\Phi(f_t)$ can be commuted through $U(t,s)$ up to scalar errors by virtue of~\eqref{eq: ccr},
there exists for every $\psi \in Y$ 
a sequence $(\psi_n)$ in $\mathcal{F}_{+}^0(\mathfrak{h})$ such that $\psi_n \longrightarrow \psi$ and 
\begin{align*}
i \Phi(f_t) U(t,s) \psi_n \longrightarrow U(t,s) \Big( i \Phi(f_t)\psi - i \int_s^t \Im \scprd{ f_t, f_\tau} \,d\tau \psi \Big) = i \Phi(f_t) U(t,s)\psi
\end{align*}
uniformly in $t \in I$ as $n \to \infty$. It follows that $t \mapsto U(t,s)\psi = \lim_{n \to \infty} U(t,s)\psi_n$ is continuously differentiable even for $\psi \in Y$ with the desired derivative. So, $U$ defined by~\eqref{eq: def U, pedestrian} is indeed an evolution system in the wide sense for $A$ on $Y = D(H_{\omega}^{1/2})$, and it is also unique by virtue of~\cite{Kato53} (Theorem~1).
\smallskip

With the help of the above well-posedness result for Segal field operators we will now establish 
the well-posedness of the initial value problems for operators $H_{\omega} + \Phi(f_t)$ in $\mathcal{F}_+(\mathfrak{h})$ with $\mathfrak{h} := L^2(\R^3)$. 
Such operators are sometimes called van Hove Hamiltonians 
and they describe a classical particle coupled to a time-dependent quantized field of bosons: 
$H_{\omega}$ describes the energy of the field while $\Phi(f_t)$ describes the interaction of the particle with the field. (See, for instance,~\cite{Derezinski03} or~\cite{KelerTeufel12}.) The operator $H_{\omega}$ is the second quantization of the dispersion relation $\omega: \R^3 \to \R$, a measurable function with $\omega(k) > 0$ for almost every $k \in \R^3$, 
that is, $H_{\omega}$ is the operator on $\mathcal{F}_+(\mathfrak{h}) = \bigoplus_{n \in \N \cup \{0\}} \mathfrak{h}^{(n)}_+$ defined by 
\begin{align} \label{eq: def H_omega, 1}
(H_{\omega} \psi)^{(n)} := H_{\omega}^{(n)} \psi^{(n)}  \text{\,\, for \,} \psi \in D(H_{\omega}) := \{ \psi 
\in \mathcal{F}_+(\mathfrak{h}): (H_{\omega}^{(n)} \psi^{(n)}) \in \mathcal{F}_+(\mathfrak{h}) \},
\end{align}
where the operators $H_{\omega}^{(n)}$ act by multiplication as follows: 
\begin{align} \label{eq: def H_omega, 2}
H_{\omega}^{(0)} \psi^{(0)}:= 0 \quad \text{and} \quad (H_{\omega}^{(n)} \psi^{(n)})(k_1, \dots, k_n) := \sum_{i=1}^n \omega(k_i) \psi^{(n)}(k_1, \dots, k_n)
\end{align}
for $\psi^{(0)} \in \mathfrak{h}^{(0)}_+ = \C$ and $\psi^{(n)} \in  \mathfrak{h}^{(n)}_+ = L^2_+(\R^{3n}) := \{ \varphi \in L^2(\R^{3n}): \varphi(k_{\sigma(1)}, \dots, k_{\sigma(n)}) = \varphi(k_1, \dots, k_n) \text{ for all permutations } \sigma \}$ (Example~1 in Section~X.7 of~\cite{ReedSimon}).
It is well-known and easy to see that $H_{\omega}$ is a positive self-adjoint operator and that for all $f$ with $f \in \mathfrak{h}$ and $f/\sqrt{\omega} \in \mathfrak{h}$ one has: $D(H_{\omega}^{1/2}) \subset D(\Phi(f))$ and
\begin{align} \label{eq: absch Phi durch H_w^{1/2}}
\norm{\Phi(f)\psi} \le  2^{1/2} \big( \norm{f}^2 + \norm{f/\sqrt{\omega}}^2 \big)^{1/2} \| (H_{\omega}+1)^{1/2} \psi \|
\end{align}
for all $\psi \in D(H_{\omega}^{1/2})$. 
(See, for instance, (13.70) of~\cite{Spohn04} or (20.33) and (20.34) of~\cite{GustafsonSigal11}.) With the help of~\eqref{eq: absch Phi durch H_w^{1/2}} it easily follows that $\Phi(f)$ is infinitesimally bounded w.r.t.~$H_{\omega}$ and hence 
that $H_{\omega} + \Phi(f)$ is self-adjoint on $D(H_{\omega})$ provided $f \in \mathfrak{h}$ and $f/\sqrt{\omega} \in \mathfrak{h}$.

\begin{cor} \label{cor: wohlg qft}
Set $A(t) = -i (H_{\omega} + \Phi(f_t))$ in $X := \mathcal{F}_+(\mathfrak{h})$, where $\mathfrak{h} := L^2(\R^3)$ and $\omega: \R^3 \to \R$ is measurable with $\omega(k) > 0$ for almost all $k \in \R^3$, and suppose that 
$t \mapsto f_t/\sqrt{\omega} \in \mathfrak{h}$ is continuous and $t \mapsto f_t \in \mathfrak{h}$ is absolutely continuous. 
Then there exists a unique evolution system $U$ for $A$ on $D(H_{\omega})$ and it is given by~\eqref{eq: darst qft, 1} and~\eqref{eq: darst qft, 2} below.
\end{cor}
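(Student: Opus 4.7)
The plan is to reduce the problem to the pure Segal-field case of Corollary~\ref{cor: wohlg segal} by passing to the interaction picture relative to the free dynamics generated by $-iH_\omega$. Accordingly, I set $\tilde{f}_t := e^{it\omega} f_t \in \mathfrak{h}$, where $e^{it\omega}$ denotes the unitary one-parameter group of multiplications on $\mathfrak{h} = L^2(\R^3)$. Joint strong continuity of $(t,f) \mapsto e^{it\omega} f$, combined with the assumed continuity of $t \mapsto f_t$ and $t \mapsto f_t/\sqrt{\omega}$, shows that both $t \mapsto \tilde{f}_t$ and $t \mapsto \tilde{f}_t/\sqrt{\omega} = e^{it\omega}(f_t/\sqrt{\omega})$ (here using that $e^{it\omega}$ and $\sqrt{\omega}$ commute) are continuous maps $I \to \mathfrak{h}$. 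Thus the pedestrian variant of Corollary~\ref{cor: wohlg segal} discussed just after its proof furnishes a unique evolution system $V$ in the wide sense for $B(\cdot) := -i\Phi(\tilde{f}_{\cdot})$ on $Y := D(H_\omega^{1/2})$, given by
\begin{equation*}
V(t,s) = W\Bigl(\int_s^t \tilde{f}_\tau \, d\tau\Bigr) \, e^{-i/2 \int_s^t \int_s^\tau \Im \scprd{\tilde{f}_\tau, \tilde{f}_\sigma} \, d\sigma \, d\tau}.
\end{equation*}

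I would then define the candidate $U(t,s) := e^{-itH_\omega} V(t,s) e^{isH_\omega}$ for $(s,t) \in \Delta$; its cocycle and strong-continuity properties are inherited from those of $V$ and of the unitary group $t \mapsto e^{-itH_\omega}$. The intertwining identities $e^{-itH_\omega} W(h) e^{itH_\omega} = W(e^{-it\omega} h)$ and $e^{-itH_\omega} \Phi(h) e^{itH_\omega} = \Phi(e^{-it\omega} h)$, both immediate from the analogous identities for $a(h)$ and $a^*(h)$, let me recast $U$ as
\begin{equation*}
U(t,s) = W(h_{t,s}) \, e^{-i(t-s)H_\omega} \, e^{-i/2 \int_s^t \int_s^\tau \Im \scprd{f_\tau, e^{-i(\tau-\sigma)\omega} f_\sigma} \, d\sigma \, d\tau},
\end{equation*}
with $h_{t,s} := \int_s^t e^{-i(t-\tau)\omega} f_\tau \, d\tau$, which should coincide with the representation formulas announced in the statement.

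The hard part will be to upgrade from the wide-sense invariance of $D(H_\omega^{1/2})$ under $V$ to the strict invariance $U(t,s) D(H_\omega) \subset D(H_\omega)$ and the corresponding $C^1$-regularity of $t \mapsto U(t,s)\psi$ for $\psi \in D(H_\omega)$. For the invariance I would invoke that $W(h)$ preserves $D(H_\omega)$ whenever $\omega h \in \mathfrak{h}$, and integrate by parts in the spectral representation of $\omega$ to obtain
\begin{equation*}
\omega h_{t,s} = -i\bigl(f_t - e^{-i(t-s)\omega} f_s\bigr) + i \int_s^t e^{-i(t-\tau)\omega} \dot{f}_\tau \, d\tau,
\end{equation*}
which lies in $\mathfrak{h}$ because $\dot{f} \in L^1(I;\mathfrak{h})$ by the absolute continuity of $t \mapsto f_t$ -- precisely where that hypothesis is used. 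Granted invariance, the product rule applied to $U(t,s) = e^{-itH_\omega} V(t,s) e^{isH_\omega}$, together with the identity $e^{-itH_\omega} \Phi(\tilde{f}_t) e^{itH_\omega} = \Phi(f_t)$, yields
\begin{equation*}
\partial_t U(t,s)\psi = -i\bigl(H_\omega + \Phi(f_t)\bigr) U(t,s)\psi = A(t) U(t,s)\psi \qquad (\psi \in D(H_\omega)),
\end{equation*}
and continuity of this derivative in $X$ follows from the continuity of $t \mapsto f_t$ in $\mathfrak{h}$ and the relative bound~\eqref{eq: absch Phi durch H_w^{1/2}}. Uniqueness is then immediate from the general argument at the beginning of Section~2.
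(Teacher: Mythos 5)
Your proposal is correct and follows essentially the same route as the paper's proof: interaction-picture reduction to the Segal-field case, invariance of $D(H_\omega)$ via $W(h)D(H_\omega)\subset D(H_\omega)$ for $h\in D(\omega)$ established by an integration by parts (which is exactly where the absolute continuity of $t\mapsto f_t$ enters, via a.e.\ differentiability in the reflexive space $\mathfrak{h}$), followed by the product rule and the relative bound~\eqref{eq: absch Phi durch H_w^{1/2}}. The only minor divergences are that you invoke the ``pedestrian'' exponential-series variant of Corollary~\ref{cor: wohlg segal} to build the interaction-picture evolution on $D(H_\omega^{1/2})$ where the paper uses Corollary~\ref{cor: wohlg segal} itself on the maximal continuity subspace $\tilde Y^{\circ}$, and that to make your integration by parts rigorous you should first establish $h_{t,s}\in D(\omega)$ --- e.g.\ by inserting $(i\omega+1)^{-1}$ before integrating by parts, as the paper does --- since the same formula also furnishes the continuity of $t\mapsto\omega h_{t,s}$ needed for the continuity of $t\mapsto H_\omega U(t,s)\psi$.
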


\begin{proof}
It follows from the remarks above that the operators $A(t)$ are skew self-adjoint with time-independent domain $D(H_{\omega})$ because $f_t, f_t/\sqrt{\omega} \in \mathfrak{h}$ by assumption. Since at least formally $[H_{\omega}, i \Phi(g)] = \Phi(i \omega g)$ by virtue of Lemma~2.5~(ii) of~\cite{DerezinskiGerard99}, 
the $p$-fold commutators~\eqref{eq: p-facher kommutator skalar} will not collapse to a complex scalar in general. We can therefore not hope to apply the results from Section~2.1 and~2.2 directly. We can, however, reduce the desired assertion to Corollary~3.1 by switching to the interaction picture, that is, we define 
a candidate for the sought evolution system $U$
as the interaction picture evolution, 
\begin{align} \label{eq: darst qft, 1}
U(t,s) := e^{-i H_{\omega}t} \tilde{U}(t,s) e^{i H_{\omega}s},
\end{align}
where $\tilde{U}$ denotes the evolution system for $\tilde{A}$ with $\tilde{A}(t) := -i e^{i H_{\omega} t} \Phi(f_t) e^{-i H_{\omega} t}$. 
It has to be shown, of course, that this evolution exists on an appropriate dense subspace, and this can be done by way of Corollary~3.1. Indeed, 
\begin{align} \label{eq: Phi(f) an e^(i H_w)}
e^{i H_{\omega} t} \Phi(f) e^{-i H_{\omega} t} = \Phi(e^{i \omega t} f) \quad (f \in \mathfrak{h}, t \in \R)
\end{align}
by Theorem~X.41~(e) of~\cite{ReedSimon}, that is, the operator $\tilde{A}(t)$ is ($i$ times) a Segal field operator,
\begin{align*}
\tilde{A}(t) = -i e^{i H_{\omega} t} \Phi(f_t) e^{-i H_{\omega} t} = i \Phi(\tilde{f}_t) \quad \text{with} \quad \tilde{f}_t := -e^{i \omega t} f_t
\end{align*}
and $t \mapsto \tilde{f}_t$ is obviously continuous. So, by Corollary~\ref{cor: wohlg segal}, the evolution system $\tilde{U}$ exists on the maximal continuity subspace $\tilde{Y}^{\circ}$ for $\tilde{A}$ and is given by
\begin{align} \label{eq: darst qft, 2}
\tilde{U}(t,s) = W(g_{t,s}) e^{-i/2 \int_s^t \int_s^{\tau} \Im \scprd{\tilde{f}_{\tau}, \tilde{f}_{\sigma}} \, d\sigma \, d\tau} \quad \text{with \,\,} g_{t,s} := \int_s^t \tilde{f}_{\tau} \, d\tau. 
\end{align}
We now show that $U$, given by~\eqref{eq: darst qft, 1} and~\eqref{eq: darst qft, 2}, is an evolution system for $A$ on $D(H_{\omega})$. 
In order to see that $t \mapsto U(t,s)\psi$ is differentiable for all $\psi \in D(H_{\omega})$ with the desired derivative 
\begin{align} \label{eq: qft, formel für abl}
t \mapsto -i(H_{\omega} + \Phi(f_t)) U(t,s)\psi,
\end{align} 
we have to 
show in view of~\eqref{eq: darst qft, 1} that 
\begin{align} \label{eq: qft, zwbeh 0}
D(H_{\omega}) \subset \tilde{Y}^{\circ} \quad \text{and} \quad \tilde{U}(t,s) D(H_{\omega}) \subset  D(H_{\omega}) \quad ((s,t) \in \Delta).
\end{align}
And in order to show that~\eqref{eq: qft, formel für abl} 
is continuous, we would like to move the unbounded operators $H_{\omega}$ and $\Phi(f_t)$ through the constituents $e^{-i H_{\omega}t}$ and $W(g_{t,s})$ of $U(t,s)$ in a suitable way. 
We first show the inclusion~(\ref{eq: qft, zwbeh 0}.a) 
and the continuity of $t \mapsto \Phi(f_t)U(t,s)\psi$ for all $\psi \in D(H_{\omega})$. 
It easily follows from~\eqref{eq: absch Phi durch H_w^{1/2}} and the assumed continuity of $t \mapsto f_t, f_t/\sqrt{\omega} \in \mathfrak{h}$ that
\begin{align} \label{eq: D(H_w) in Y}
D(H_{\omega}) \subset D(H_{\omega}^{1/2}) \subset \tilde{Y}^{\circ}
\end{align}
and hence that~(\ref{eq: qft, zwbeh 0}.a) holds true.
It also follows from~\eqref{eq: Phi(f) an e^(i H_w)} and~\eqref{eq: ccr} that
\begin{gather*}
\Phi(f_t) e^{-i H_{\omega}t} = e^{-i H_{\omega}t} \Phi(e^{i \omega t} f_t), \\
\Phi(e^{i \omega t} f_t) W(g_{t,s}) = W(g_{t,s}) \big( \Phi(e^{i \omega t} f_t) - \Im \scprd{ e^{i \omega t} f_t, g_{t,s} } \big).
\end{gather*}
So, $t \mapsto \Phi(f_t) e^{-i H_{\omega}t} W(g_{t,s}) \psi$ and hence $t \mapsto \Phi(f_t) U(t,s)\psi$ is continuous for $\psi \in D(H_{\omega})$ because $t \mapsto \Phi(e^{i \omega t} f_t) \psi$ is continuous for $\psi \in D(H_{\omega})$ by~\eqref{eq: absch Phi durch H_w^{1/2}} and because $t \mapsto W(g_{t,s})$ is strongly continuous by Proposition~5.2.4~(4) of~\cite{BratteliRobinson}.
We now show the inclusion~(\ref{eq: qft, zwbeh 0}.b) and the continuity of $t \mapsto H_{\omega}U(t,s)\psi$ for all $\psi \in D(H_{\omega})$ by showing that $W(g_{t,s}) D(H_{\omega}) \subset D(H_{\omega})$ and that $H_{\omega}$ can be moved through $W(g_{t,s})$ in a suitable way. It is here that the assumed absolute continuity of $t \mapsto f_t$ will come into play.
Since 
\begin{align} \label{eq: H_w and W(g) vorbei}
W(g)D(H_{\omega}) = D(H_{\omega}) \quad \text{and} \quad H_{\omega} W(g) = W(g)\big( H_{\omega} + \Phi(i \omega g) + \scprd{g, \omega g}/2 \big)
\end{align}
for every $g \in D(\omega) = \{ h \in \mathfrak{h}: \omega h \in \mathfrak{h} \}$ (Lemma~2.5~(ii) of~\cite{DerezinskiGerard99}), 
we are led to showing that
\begin{align} \label{eq: qft, zwbeh}
g_{t,s}  \in D(\omega) \quad \text{and} \quad t \mapsto \omega g_{t,s} \in \mathfrak{h} \text{ is continuous.}
\end{align}
In order to do so, notice that the map $\tau \mapsto f_{\tau}$, being absolutely continuous with values in the reflexive space $\mathfrak{h}$, is differentiable almost everywhere (Corollary~1.2.7 of~\cite{ArendtBatty12}) and that $\tau \mapsto e^{i \omega \tau} (i\omega + 1)^{-1}$ is strongly continuously differentiable. We can therefore (Proposition~1.2.3 of~\cite{ArendtBatty12}) 
perform the following integration by parts:
\begin{align*}
-g_{t,s} 
&= \int_s^t e^{i \omega \tau} (i\omega + 1)^{-1} f_{\tau} \,d\tau + \int_s^t e^{i \omega \tau} i \omega \,  (i\omega + 1)^{-1} f_{\tau} \,d\tau \\
&= (i\omega + 1)^{-1} \Big( \int_s^t e^{i \omega \tau}  f_{\tau} \,d\tau + e^{i \omega \tau} f_{\tau} \Big|_{\tau=s}^{\tau=t} - \int_s^t e^{i \omega \tau} f_{\tau}' \,d\tau \Big).
\end{align*}
So, \eqref{eq: qft, zwbeh} follows. 
With the help of~\eqref{eq: qft, zwbeh} we now obtain from~\eqref{eq: H_w and W(g) vorbei} the following conclusions: 
first, that $W(g_{t,s})D(H_{\omega}) = D(H_{\omega})$ for all $(s,t) \in \Delta$ and hence that~(\ref{eq: qft, zwbeh 0}.b) holds true and second, that $t \mapsto H_{\omega}W(g_{t,s})\psi$ and hence $t \mapsto H_{\omega}U(t,s)\psi$ is continuous for $\psi \in D(H_{\omega})$ because $t \mapsto \Phi(i \omega g_{t,s}) \psi$ is continuous for $\psi \in D(H_{\omega})$ by~\eqref{eq: absch Phi durch H_w^{1/2}} and~\eqref{eq: qft, zwbeh} and because $t \mapsto W(g_{t,s})$ is strongly continuous by Proposition~5.2.4~(4) of~\cite{BratteliRobinson}.
\end{proof}

If one suitably strengthens or modifies the assumptions of Corollary~3.2, one can conclude the well-posedness statement of that corollary (but not the representation~\eqref{eq: darst qft, 1} and~\eqref{eq: darst qft, 2} for the evolution, of course) by means of various general well-posedness theorems. 
Indeed, if for instance one adds the assumption that $t \mapsto f_t/\sqrt{\omega} \in \mathfrak{h}$ be absolutely continuous as well, 
then the well-posedness statement of Corollary~3.2 can also be concluded from~\cite{Kato73} (Theorem~1) 
because, under the thus strengthened assumptions, the strong $W^{1,1}$-regularity condition on $t \mapsto A(t)$ required in~\cite{Kato73} 
can be 
verified 
by means of~\eqref{eq: absch Phi durch H_w^{1/2}}.
Similarly, if one replaces the absolute continuity condition on $t \mapsto f_t$ by the assumption that both $t \mapsto f_t$ and $t \mapsto f_t/\sqrt{\omega}$ be of bounded variation and continuous, then the well-posedness statement of Corollary~3.2 can be concluded from~\cite{Kato53} (Theorem~3). It is not difficult to find functions $\omega$ and $f_t$ as in the above corollary such that $t \mapsto f_t /\sqrt{\omega}$ is not of bounded variation (so that Corollary~\ref{cor: wohlg qft} does not follow from~\cite{Kato53}). 
Choose, for instance, $f_0 \in \mathfrak{h}$ with $f_0(k) = 1$ for $|k| \le 1$, and $\alpha \in [3/2,3)$ and then set
\begin{align*}
\omega(k) := |k|^{\alpha} \quad \text{and} \quad f_t(k) := e^{i \omega(k)^{-1/2} t} f_0(k)  \quad (k \in \R^3).
\end{align*}

\subsection{Schrödinger operators for external electric fields}

In this subsection we apply the well-posedness result of Section~2.2 to Schrödinger operators $-\Delta + b(t) \cdot x$ in $L^2(\R^d)$. Such operators describe a quantum particle in a time-dependent spatially constant electric field $b(t) \in \R^d$ and they are shown to be essentially self-adjoint below. Setting $A(t) = \ol{i \Delta - i b(t) \cdot x}$, we obtain by formal computation 
\begin{gather} \label{eq: vertrel, schrödinger, formal}
[A(t_1), A(t_2)] = 2 \sum_{\kappa=1}^d (b_{\kappa}(t_2)-b_{\kappa}(t_1)) \partial_{\kappa}, \quad
\big[ [A(t_1), A(t_2)], A(t_3) \big] = \mu(t_1,t_2,t_3) 
\end{gather}
with $\mu(t_1,t_2,t_3) := -2i \sum_{\kappa=1}^d (b_{\kappa}(t_2)-b_{\kappa}(t_1)) b_{\kappa}(t_3) \in \C$. 
In view of~\eqref{eq: vertrel, schrödinger, formal} we expect to obtain well-posedness for the operators $A(t)$ by means of Theorem~\ref{thm: wohlgestelltheit, allg p} with $p=2$. Indeed, we have (see also the remarks below):

\begin{cor} \label{cor: schrödinger}
Set $A(t) = \ol{A_0 + B(t)}$ in $X := L^2(\R^d)$ (existence of the closure is shown below), 
where $A_0 := i \Delta$ with $D(A_0) = W^{2,2}(\R^d)$ and where $B(t)$ is multiplication by $-i b(t) \cdot x$, and suppose $t \mapsto b(t) \in \R^d$ is continuous. Then there exists a unique evolution system $U$ for $A$ on the maximal continuity subspace $Y^{\circ}$ for $A = C^{(0)}$ and $C^{(1)}$ defined in~\eqref{eq: def C^(1), schrödinger}.  
Additionally, $U$ is given by~\eqref{eq: darstformel schrödinger, 1} and~\eqref{eq: darstformel schrödinger, 2} below.
\end{cor}

\begin{proof}
(i) We first show that $A_0 + B(t_0)$ for every $t_0 \in I$ is essentially skew self-adjoint and that the unitary group generated by $A := \ol{A_0+B(t_0)}$ is given by
\begin{align} \label{eq: darst T}
e^{A t} = e^{A_0 t} e^{B t} e^{-\partial_1 b_1 t^2} \dotsb e^{-\partial_d b_d t^2} e^{2i b^2 t^3/3} \quad (t \in \R),
\end{align}  
where $B := B(t_0)$ and $b = (b_1, \dots, b_d) := b(t_0) \in \R^d$. We do so by showing that the right hand side of~\eqref{eq: darst T}, which we abbreviate as 
$T(t)$, defines a strongly continuous unitary group in $X$ with
\begin{align*}
A_0 + B \subset A_T \quad \text{and} \quad T(t) D(A_0+B) \subset D(A_0+B) \quad (t \in \R),
\end{align*}
where $A_T$ stands for the generator of $T$. (In order to understand why $e^{A \,.\,}$ should decompose as in~\eqref{eq: darst T}, plug the following formal commutators
\begin{align*}
[B,A_0] = - 2 \sum_{\kappa=1}^d b_{\kappa} \partial_{\kappa}, \quad [[B,A_0], B] = 2i b^2, \quad  [[B,A_0], A_0] = 0
\end{align*}
into the Zassenhaus formula~\cite{Magnus54}, \cite{Suzuki77}, \cite{CasasMurua12} for bounded operators.)
With the help of the explicit formulas for the groups $e^{A_0 \,.\,}$ (free Schrödinger group), $e^{B \,.\,}$ (multiplication group), $e^{\partial_{\kappa} \,.\,}$ (translation group) we find the following commutation relations,
\begin{gather} 
e^{A_0 t} e^{\partial_{\kappa}s} = e^{\partial_{\kappa}s} e^{A_0 t}, \quad  e^{B t} e^{\partial_{\kappa}s} = e^{\partial_{\kappa}s} e^{B t} e^{i b_{\kappa} ts}, \notag \\
e^{A_0 t} e^{B s} = e^{B s} e^{A_0 t} e^{2 \partial_1 b_1 ts} \dotsb e^{2 \partial_d b_d ts} e^{-ib^2 t s^2} \qquad (s,t \in \R). \label{eq: schrödinger, vertrel}
\end{gather}
It follows from~\eqref{eq: schrödinger, vertrel} that $T$ is indeed a strongly continuous unitary group and that
\begin{gather*}
e^{\partial_{\kappa} s} D(A_0) \subset D(A_0), \quad e^{\partial_{\kappa} s} D(B) \subset D(B), \quad
e^{B s} D(A_0) \subset D(A_0), \\
 e^{A_0 t} D(A_0+B) \subset D(B) \qquad (s,t \in \R),
\end{gather*}
so that 
$T(t) D(A_0+B) \subset D(A_0+B)$ for all $t \in \R$ and $A_0 + B \subset A_T$. Consequently, $A_0 + B$ is essentially skew self-adjoint and $A = \ol{A_0 + B}$ is equal to $A_T$. 
After these preparations we can now verify the assumptions of Theorem~\ref{thm: wohlgestelltheit, allg p} for $p = 2$. Indeed, using the commutation relations~\eqref{eq: schrödinger, vertrel} we find that
\begin{align} \label{eq: schrödinger, vertrel halbgr}
e^{C_{1 2} \sigma} e^{A_3 \tau} = e^{A_3 \tau} e^{C_{1 2} \sigma} e^{\mu_{1 2 3} \tau \sigma}, \quad
e^{A_1 \sigma} e^{A_2 \tau} = e^{A_2 \tau} e^{A_1 \sigma} e^{C_{1 2} \tau \sigma} e^{\mu_{1 2 2} \tau^2 \sigma/2} e^{\mu_{1 2 1} \tau \sigma^2/2} 
\end{align}
for all $\sigma, \tau \in \R$, where $A_j := A(t_j) = C^{(0)}(t_j)$, $b_j := b(t_j)$, 
$\mu_{j k l} := -2 i \sum_{\kappa=1}^d (b_{k \, \kappa}-b_{j \, \kappa}) b_{l \, \kappa}$, and
\begin{align} \label{eq: def C^(1), schrödinger} 
C_{j k} = C^{(1)}(t_j,t_k) \text{ is the closure of \,} 2 \sum_{\kappa=1}^d (b_{k \, \kappa}-b_{j \, \kappa}) \partial_{\kappa},
\end{align}
that is, $C_{j k}$ generates the translation group $t \mapsto e^{ 2(b_{k \, 1}-b_{j\,1})\partial_1 t} \dotsb e^{ 2(b_{k \, d}-b_{j\,d})\partial_d t}$. 
And from~\eqref{eq: schrödinger, vertrel halbgr}, in turn, the commutation relations imposed in Theorem~\ref{thm: wohlgestelltheit, allg p} follow by differentiation at $\sigma = 0$. Since, moreover, the maximal continuity subspace for $A = C^{(0)}$ and $C^{(1)}$ contains the dense subspace of Schwartz functions on $\R^d$, the existence of a unique evolution system $U$ for $A$ on $Y^{\circ}$ follows by Theorem~\ref{thm: wohlgestelltheit, allg p}.
\smallskip

(ii) We now show the following representation formula for $U$:
\begin{align} \label{eq: darstformel schrödinger, 1}
U(t,s) = W(t) \tilde{U}(t,s) W(s)^{-1} = e^{\ol{(\int_0^t B(\tau) \,d\tau)^{\circ}}} \, e^{\ol{\int_s^t \tilde{A}(\tau) \,d\tau}} \, e^{-\ol{(\int_0^s B(\tau) \,d\tau)^{\circ}}}, 
\end{align}
where $\tilde{U}$ is the evolution system for $\tilde{A}$ on $D := W^{2,2}(\R^d)$ with $\tilde{A}(t) := -i (-i \nabla - c(t))^2$ and $c(t) := \int_0^t b(\tau) \,d\tau$ and where the gauge transformation $W$ is the evolution system for $B$ on $Z^{\circ}$, the maximal continuity subspace for $B$. Clearly, since $B(\tau) = -i b(\tau) \cdot x$ and $\tilde{A}(\tau) = -i \mathcal{F}^{-1} (\xi - c(\tau))^2 \mathcal{F}$, 
\begin{align} \label{eq: darstformel schrödinger, 2}
e^{\ol{(\int_0^t B(\tau) \,d\tau)^{\circ}}} = e^{-i \int_0^t b(\tau) \cdot x \,d\tau} \quad \text{and} \quad 
e^{\ol{\int_s^t \tilde{A}(\tau) \,d\tau}} = \mathcal{F}^{-1} e^{-i \int_s^t (\xi-c(\tau))^2 \, d\tau} \mathcal{F}
\end{align}
(which last expression could be cast into 
a more explicit integral form similar to the explicit integral representation of the free Schrödinger group). 
It should be noticed that, due to the pairwise commutativity of the opertors $\tilde{A}(t)$ and of the operators $B(t)$, the existence of the evolution systems $\tilde{U}$ and $W$, and the second equality in~\eqref{eq: darstformel schrödinger, 1} already follow by~\cite{Goldstein69} and~\cite{NickelSchnaubelt98}. In order to see the first equality in~\eqref{eq: darstformel schrödinger, 1}, one shows by similar arguments as those of part~(i) above 
that the 
subspace $Y_0^{\circ} := D \cap Z^{\circ}$ of $Y^{\circ}$ is invariant under $W(s)^{-1}$, $\tilde{U}(t,s)$, $W(t)$ and that 
\begin{gather*}
A_0 W(t)f = W(t) \tilde{A}(t)f \\ 
B(r) \tilde{U}(t,s)f = \tilde{U}(t,s)\Big( B(r)f - 2 \sum_{\kappa=1}^d b_{\kappa}(r) \, (t-s) \, \partial_{\kappa}f + 2i \sum_{\kappa=1}^d b_{\kappa}(r) \int_s^t c_{\kappa}(\tau)\,d\tau \, f \Big) 
\end{gather*}
for $f \in Y_0^{\circ}$. (Show commutation relations for $e^{\tilde{A}(r_1) \sigma}$ and $e^{B(r_2) \tau}$ analogous to~\eqref{eq: schrödinger, vertrel} to obtain commutation relations for $B(r_2)$ with $e^{\tilde{A}(r_1) \sigma}$ and then use the standard product approximants for the evolution systems $W$ and $\tilde{U}$.)
It then follows that $U_0$ defined by $U_0(t,s) := W(t) \tilde{U}(t,s) W(s)^{-1}$ is an evolution system for $A$ on $Y_0^{\circ}$, which by the standard uniqueness 
argument for evolution systems must coincide with $U$. 
\end{proof}

We see from part~(ii) of the above proof that the existence of an evolution system $U_0$ for $A$ on the subspace $Y_0^{\circ}$, after a suitable gauge transformation, already follows by~\cite{Goldstein69}, \cite{NickelSchnaubelt98} -- but in order to obtain well-posedness on $Y^{\circ}$, the results from~\cite{Goldstein69}, \cite{NickelSchnaubelt98} do not suffice, because the subspace $Y^{\circ}_0$ is strictly contained in $Y^{\circ}$ in general. 
(Indeed, if for instance $b(t) \equiv 1 \in \R^d$ with $d = 1$, then the function $\psi$ with $\psi(\xi) := e^{i \xi^3/3} / \xi$ for $\xi \in [1,\infty)$ and $\psi(\xi) := 0$ for $\xi \in (-\infty,1)$ does not belong to the range of $C-i := i \partial_{\xi} + {\xi}^2 - i$. Consequently, 
$-\partial_x^2 + x - i = \mathcal{F}^{-1} (C-i) \mathcal{F}$ is not surjective so that 
\begin{align*}
Y_0^{\circ} = D(A_0+B) = D(-\partial_x^2 + x) \subsetneq D(\ol{-\partial_x^2 + x}) = D(A) = Y^{\circ}
\end{align*}
by the standard criterion for self-adjointness.) 
We finally remark that the results of~\cite{Yajima87} do not apply to the situation of this section.

\bigskip

Acknowledgement: I would like to thank Prof.~Marcel Griesemer for many helpful discussions and comments. Also I would like to thank DFG for financial support under the grant GR 3213/1-1.

\end{document}